
\documentclass[leqno,a4paper,intlimits]{amsart}

\usepackage{amsmath,amssymb,amsthm}
\usepackage{enumerate}
\usepackage{graphics,graphicx,color}
\usepackage{multicol}
\usepackage{mathrsfs}

\newtheorem{theorem}{Theorem}[section]

\newtheorem{lemma}[theorem]{Lemma}
\newtheorem{proposition}[theorem]{Proposition}
\theoremstyle{definition}

\newtheorem{assumption}[theorem]{Assumption}

\newtheorem{remark}[theorem]{Remark}

\def\ran{\mathop{\mathrm{ran}}}
\def\dd{\mathrm{d}}
\def\dsigma{\:\dd\sigma}
\def\ee{\mathrm{e}}

\def\RR{{\mathbb{R}}}%
\def\NN{{\mathbb{N}}}



\def\calT{{\mathcal{T}}}

\def\calA{{\mathcal{A}}}

\def\calP{{\mathcal{P}}}
\def\calO{{\mathcal{O}}}
\def\calI{{\mathcal{I}}}
\def\calE{{\mathcal{E}}}
\def\calG{{\mathcal{G}}}

\def\calT{\mathcal{T}}

\def\calE{\mathcal{E}}
\def\calD{\mathcal{D}}

%

\def\W{\mathrm{W}}
\def\Ell{\mathrm{L}}
\def\BV{\mathrm{BV}}
\def\Lip{\mathrm{Lip}}
\def\Ce{\mathrm{C}}

\def\LLL{\mathscr{L}}

\def\dx{\xi}

\begin{document}

 \title[Splitting for dissipative delay]{Operator splitting for dissipative delay equations}

\author[A. B\'{a}tkai]{Andr\'{a}s B\'{a}tkai}
\address{A.B., E\"{o}tv\"{o}s Lor\'{a}nd University, Institute of Mathematics,
  1117 Budapest, P\'{a}zm\'{a}ny P\'eter s\'{e}t\'{a}ny 1/C, Budapest, Hungary}
\curraddr{Bergische Universit\"at Wuppertal, School of Mathematics and Natural Sciences, Gau\ss
  strasse 20, 42119, Wuppertal, Germany}
\email{batka@cs.elte.hu}

\author[P. Csom\'{o}s]{Petra Csom\'{o}s}
\address{P.Cs., E\"{o}tv\"{o}s Lor\'{a}nd University, Institute of Mathematics,
  and MTA-ELTE Numerical Analysis and Large Networks Research Group, Hungarian Academy of Sciences,
1117 Budapest, P\'{a}zm\'{a}ny P\'eter s\'{e}t\'{a}ny 1/C, Budapest, Hungary}
\email{csomos@cs.elte.hu}

\author[B. Farkas]{B\'{a}lint Farkas}
\address{B.F., Bergische Universit\"at Wuppertal, School of Mathematics and Natural Sciences, Gau\ss
  strasse 20, 42119, Wuppertal, Germany}
\email{farkas@uni-wuppertal.de}

\keywords{Lie-Trotter product formula, operator splitting, order of convergence, $C_0$-semigroups, delay equation}

\subjclass{47D06, 47N40, 65J10, 34K06}

\date\today
\begin{abstract}
We investigate Lie--Trotter product formulae for abstract nonlinear evolution equations with delay. Using results from the theory of nonlinear contraction semigroups in Hilbert spaces, we explain the convergence of the splitting procedure. The order of convergence is also investigated in detail, and some numerical illustrations are presented.
\end{abstract}
\maketitle
\section{Introduction}

From its very beginning, operator semigroup theory has played an important role in explaining the convergence of numerical methods, see the seminal paper by Lax and Richtmyer \cite{Lax-Richtmyer}, or the monograph by Butzer and Behrens \cite{Butzer-Berens}.

The Lie--Trotter product formula and its generalizations to nonlinear semigroups play a central role in semigroup theory, as witnessed in the monographs by Goldstein \cite{Goldstein_book} or by Engel and Nagel \cite{Engel-Nagel}. And this product formula is widely used in numerical analysis under the name of operator splitting.

Our aim is to investigate the convergence of the Lie--Trotter product formula (or operator splitting) for nonlinear partial differential equations with delay.

Such equations play an important role in modeling physical, chemical, economical, etc.~phenomena, because it is quite natural to assume that past occurrences effect the model. For further motivation see for example the monographs by Wu \cite{Wu} or B\'{a}tkai and Piazzera \cite{Batkai-Piazzera}.

There has been lots of work describing the asymptotic behavior and regularity of solutions, as well as in the numerical analysis of ordinary differential equations with delay, see for example the monograph by Bellen and Zennaro \cite{Bellen-Zennaro}. The numerical analysis of partial differential equations with delay, however, seems to be in its infancy. There are only sporadic works on this subject, like the interesting results of Jackiewicz, Mead, and  Zubik--Kowal \cite{jz,mz} on the pseudospectral method. The present paper aims to contribute to this topic by analyzing an operator splitting procedure for nonlinear partial differential equations with delay.

The idea of operator splitting is to decompose the differential equation into simpler equations which can be solved in an effective way, and then to represent the solution of the original equation using product formulae, like the Lie--Trotter one. For ordinary
differential equations, the theory seems to be quite complete, as witnessed in
Hairer et al.~\cite[Section II.4,5]{Hairer-Lubich-Wanner}. There has
also been enormous progress in the theoretical investigation of splitting
procedures for infinite dimensional systems in recent years, see for example
the monographs by Farag\'{o} and Havasi \cite{Farago-Havasi_book}, Holden et
al.~\cite{Holden-Karlsen-Lie-Risebro}, or Lubich \cite{Lubich08}. See also the recent
papers by B\'{a}tkai et
al.~\cite{Batkai-Csomos-Farkas-Nickel1,Batkai-Csomos-Farkas-Nickel2,Batkai-Csomos-Nickel}, where
nonautonomous equations and spatial approximations are also
considered.  Unfortunately, abstract results analyzing the order of convergence
are rather incomplete, and, as it seems, can be applied to delay equations
only with considerable difficulty.

The idea to apply splitting procedures to delay equations is the following. Consider, e.g., a delayed reaction--diffusion equation of the form

\begin{equation*}
u'(t) = \Delta u(t) + g\left(u(t-1)+\int\nolimits_{-1}^{0} \eta(\sigma)\cdot u(t+\sigma) \dsigma\right).
\end{equation*}
with some initial and boundary conditions. The delay term appearing here represents the two main classes of possible delays in applications: point delays corresponding to dependence on a single event in the past, and distributed delays (given by an integral term with an integrable kernel $\eta$) corresponding to dependence on a whole time period in the past. In our opinion, distributed delays are often more realistic in modeling, but we will not restrict ourselves to them here.

The nonlinearity $g$ can be chosen from a wide range of functions depending on applications, for example a rational function in chemical reactions, or a $\tanh$ type function in neural networks, see for example the monograph by Wu \cite{Wu}.

Since the delay term is in a way a ``scalar operator'' (in the sense that it does not mix the spatial variables), it is natural to decompose the equation into two sub-problems: the heat equation
\begin{equation*}
w'(t) = \Delta w(t),
\end{equation*}
and a scalar-valued delay equation
\begin{equation*}
v'(t) =  g\left(v(t-1)+\int\nolimits_{-1}^{0} \eta(\sigma)\cdot v(t+\sigma) \dsigma\right).
\end{equation*}
Both equations can be solved numerically in an effective way. Note that the second equation becomes here an ordinary differential equation with delay, hence the methods described in Bellen and Zennaro \cite{Bellen-Zennaro} can be applied.

Our aim is to put this example in an abstract theoretical perspective explaining the convergence  and analyze the order of convergence in some special cases. To be more specific, we will consider on the Hilbert space $H$ the following abstract delay differential equation
\begin{equation*}
\left\{
\begin{aligned}
\frac{\dd u(t)}{\dd t}&=Bu(t)+\Phi u_t, \qquad t\ge 0, \\
u(0)&=x\in H, \\
u_0&=f\in\Ell^p\big([-1,0];H\big),
\end{aligned}
\right.
\end{equation*}
where $u:[-1,\infty)\to H$ is unknown and the \emph{history function} $u_t$ defined by $u_t(\sigma):=u(t+\sigma)$ for $\sigma\in[-1,0]$. The further precise definitions and assumptions will be made in Section \ref{sec:delay_sg}, see also the monograph B\'{a}tkai and Piazzera \cite{Batkai-Piazzera}.
In many cases, as explained above, it is easier to solve the equation without delay and the ``pure'' delay equation separately. In this case, it is natural to apply some operator splitting procedure described below.

Let us fix a time step $h >0$ and  solve first the equation
\begin{equation}\label{eq:spl1}
\left\{
\begin{aligned}
\frac{\dd v^{(1)}(t)}{\dd t}&=\Phi v^{(1)}_t, \qquad t\in [0,h], \\
v^{(1)}(0)&=x=:x_1 \\
v^{(1)}_0&=f=:f_1.
\end{aligned}
\right. \tag{SPL/1}
\end{equation}
Then for $y_1:= v^{(1)}(h)$ we solve the equation
\begin{equation}\label{eq:spl2}
\left\{
\begin{aligned}
\frac{\dd w^{(1)}(t)}{\dd t}&= Bw^{(1)}(t), \qquad t\in [0,h], \\
w^{(1)}(0)&=y_1.
\end{aligned}
\right. \tag{SPL/2}
\end{equation}
To initialize the next step we set
\begin{align*}
x_2&:= w^{(1)}(h)\\
f_2&:=v^{(1)}_h.
\end{align*}
Note that (SP1/2) is an undelayed equation, hence the history function remains unchanged, when applying this solution step. This is why we took the initialization above for $f_2$.
 We iterate this procedure and the \emph{sequentially split solution} at time level $t=kh$ is then
\begin{equation*}
u^{\text{sq}}(kh):=x_{k+1}.
\end{equation*}
We shall show that for fixed $t\in [0,\infty)$ and for $h\to 0$ ($t=kh$, so $k\to\infty$) this split solution $u^{\text{sq}}(kh)$ converges to $u(t)$.

\medskip This procedure is especially useful, if we can drastically reduce the computational complexity of the problem by the splitting. This is, e.g., the case if $\Phi=C\delta(-1)$ is a point delay, see also Section \ref{sec:num}. Hence, we can integrate the first split equation explicitly, reducing the problem to solving the second equation, a classical partial differential equation. For a different splitting procedure, designed specifically for distributed delays, we refer to Csom\'{o}s and Nickel \cite{Csomos-Nickel} and B\'{a}tkai et al.~\cite{Batkai-Csomos-Farkas}.

\medskip Besides the sequential splitting there are other splitting methods known in the literature. However for the sake of simplicity, in this paper we only deal  with  the sequential and the first order Lie splitting. The convergence of higher order splitting schemes can be explained along the same lines.

\medskip  In Section \ref{sec:delay_sg}, we show a way to rewrite a delay
equation as an abstract Cauchy problem, and show how to associate nonlinear
contraction semigroups to that. Also basic facts and results  about such
semigroups are recalled there. These are then used to explain the convergence
of the splitting procedure. In Section \ref{sec:order_conv} we study the order
of convergence in an important special case, when the undelayed part is
linear, and the delay term can be nonlinear. The last Section  \ref{sec:num}
is devoted to numerical experiments, where we numerically illustrate the
theoretical results about the order of convergence, and compare the proposed splitting
method to the implicit Euler method.


\section{The delay semigroup}\label{sec:delay_sg}

\noindent Consider the \emph{abstract delay equation} of the following form (see, e.g., B\'{a}tkai and Piazzera \cite{Batkai-Piazzera}):
\begin{equation}
\left\{
\begin{aligned}
\frac{\dd u(t)}{\dd t}&=Bu(t)+\Phi u_t, \qquad t\ge 0, \\
u(0)&=x\in H, \\
u_0&=f\in\Ell^p\big([-1,0];H\big)
\end{aligned}
\right.
\label{delay}
\end{equation}
on the Hilbert space $H$ where $1\leq p<\infty$ and $u_t$ is the history function, i.e.
$$
u_t(\sigma)=u(t+\sigma),\quad t\geq 0, \:\sigma\in [-1,0],
$$
where we used the initial condition given in \eqref{delay} as $u(s)=f(s)$ for $s\in[-1,0]$.

It is possible to transform the delay equation \eqref{delay} into an abstract Cauchy problem, see B\'{a}tkai and Piazzera \cite{Batkai-Piazzera}. In order to do so, we take the product space $\mathcal{E}:=H\times \Ell^p\big([-1,0];H\big)$ and the new unknown function as
\begin{equation}
t\mapsto\mathcal{U}(t):=\binom{u(t)}{u_t}\in\mathcal{E}.
\nonumber
\end{equation}
Then \eqref{delay} can be written as an abstract Cauchy problem on the space $\mathcal{E}$
\begin{equation}
\left\{
\begin{aligned}
\frac{\dd\mathcal{U}(t)}{\dd t}&=\mathcal{G}\mathcal{U}(t), \qquad t\ge 0, \\
\mathcal{U}(0)&=\tbinom{x}{f}\in\mathcal{E},
\end{aligned}
\right.
\label{acp_delay}
\end{equation}
where the operator $\mathcal{G}$ is given by the operator matrix
\begin{equation}
\mathcal{G}:=\left(\begin{array}{cc} B & \Phi \\ 0 & \frac{\dd}{\dsigma} \end{array}\right)
\end{equation}
on the domain
\begin{equation}
D(\mathcal{G}):=\big\{\tbinom{x}{f}\in D(B)\times \W^{1,p}\big([-1,0];H\big): \ f(0)=x  \big\}.
\nonumber
\end{equation}
\noindent It is shown in B\'{a}tkai and Piazzera \cite[Corollary 3.5, Proposition 3.9]{Batkai-Piazzera} that the delay equation \eqref{delay} and the abstract Cauchy problem \eqref{acp_delay} are equivalent, i.e., they have the same solutions. More precisely, the first coordinate of the solution of \eqref{acp_delay} always solves \eqref{delay}.

\medskip Now the abstract Cauchy problem \eqref{acp_delay} can be solved by semigroup methods. These we recall briefly.
The best Lipschitz constant of a Lipschitz continuous function $T:X\to X$ on a Banach space $X$ is denoted by $\|T\|_{\Lip}$. The operator $T$ is called then a \textit{contraction} if $\|T\|_{\Lip}\leq 1$. For further notation, terminology and results on m-dissipative operators and nonlinear contraction semigroups, we refer to the monograph by Miyadera \cite{miyadera:77}.

\begin{assumption}\label{ass:1} Suppose that
\begin{enumerate}
\item there is $\alpha\in\RR$ such that $B-\alpha I$ is m-dissipative and hence $B$ is the generator of a strongly continuous (nonlinear) semigroup $V$ on $H$, and
\item $\Phi:\W^{1,p}\big([-1,0];H\big) \to H$ is an operator defined as follows. Let $g:H\to H$ be a Lipschitz continuous function with Lipschitz constant $\beta$, and let $\eta:[-1,0]\to \LLL(H)$ be a function of bounded variation with values as bounded linear operators. Suppose further that
    \begin{equation*}
    \eta(-1)=0 \text{ and that } \lim_{\sigma\to -1} \eta(\sigma)\neq 0.
    \end{equation*}
    Now the operator $\Phi$ is given as $\Phi=g\circ \Psi$, where
    \begin{equation*}
    \Psi(f):= \int\nolimits_{-1}^0 \dd\eta(\sigma) f(\sigma)\qquad\qquad \text{for } f\in \Ce([-1,0];H).
    \end{equation*}
\end{enumerate}
\end{assumption}
  Following Webb \cite[Section 4]{Webb76}, it is possible to find a new equivalent norm in $\calE$ such that the operator $\calG-\gamma\calI$ becomes an m-dissipative operator and hence the generator of a (nonlinear) semigroup. We define
\begin{equation*}
\tau(r):=\int\nolimits_{-1}^r \|\dd\eta(\sigma)\|
\end{equation*}
the total variation of $\eta$ on $[-1,r]$, and introduce the new norm, equivalent to the original one, as
\begin{equation*}
\bigl\|\tbinom{x}{f}\bigr\|_\tau := \bigr( \|x\|^p + \int\nolimits_{-1}^0 \|f(\sigma)\|^p\tau(\sigma)\dsigma \bigr)^{{1}/{p}}\quad \mbox{for $\tbinom{x}{f}\in \calE$}.
\end{equation*}
It follows from the construction that we can also identify the constants as
\begin{equation}\label{eq:const_eq_norm}
\min\{1,\tau(-1)\}\bigl\|\tbinom{x}{f}\bigr\|^p\leq \bigl\|\tbinom{x}{f}\bigr\|^p_\tau\leq \max\{1,\tau(0)\}\bigl\|\tbinom{x}{f}\bigr\|^p.
\end{equation}

We shall also use the notation $\Ell^p([-1,0],\tau;H)$ and $\W^{1,p}([-1,0],\tau;H)$ if we want to stress that we consider the spaces $\Ell^p([-1,0];H)$ and $\W^{1,p}([-1,0];H)$ with the equivalent norms defined by
$$
\|f\|_{\tau,p}^p = \int\nolimits_{-1}^0 \|f(\sigma)\|^p\tau(\sigma)\dsigma
$$
and
$$
\|f\|_{\tau,1,p}^p = \int\nolimits_{-1}^0 \|f(\sigma)\|^p\tau(\sigma)\dsigma + \int\nolimits_{-1}^0 \|f'(\sigma)\|^p\tau(\sigma)\dsigma,
$$
respectively.

\medskip The abstract Cauchy problem \eqref{acp_delay} can be solved by semigroup theoretic methods. These we recall briefly.

\medskip The best Lipschitz constant of a Lipschitz continuous function $T:X\to X$ on a Banach space $X$ is denoted by $\|T\|_{\Lip}$. The operator $T$ is called then a \textit{contraction} if $\|T\|_{\Lip}\leq 1$. A subset $A\subset X\times X$ is called \textit{dissipative} if for each $\alpha>0$, $(I-\alpha A)^{-1}$ is a function and a contraction. We call $A$ \textit{m-dissipative} if $\overline{D(A)}=X$ and for all $\alpha>0$ $\ran(I-\alpha A) = X$ ($D(A)$ is the domain of definition of $A$), while for $\omega\in\RR$ the set $A$ is called \textit{$\omega$-m-dissipative} if $A-\omega I$ is m-dissipative.
A family $T$ of Lipschitz continuous operators on the Banach space $X$ is called a \emph{strongly continuous semigroup} of type $\omega\in\RR$ (or a quasi-contraction semigroup) if
\begin{enumerate}
\item $T(t):X\to X$ for all $t\geq 0$,
\item $T(t)T(s)x = T(t+s)x$ for all $t,s\geq 0$, and  $T(0)=I$,
\item $\|T(t)\|_{\Lip}\leq \ee^{\omega t}$ for all $t\geq 0$,
\item $\lim\limits_{t\to 0} T(t)x = x$.
\end{enumerate}
The central point in this theory is the theorem of Crandall and Liggett \cite{crandall-liggett} generalizing the classical Hille--Yosida Theorem.
\begin{theorem}
Let $A\subset X\times X$ be $\omega$-m-dissipative for some $\omega\in\RR$.  Then $A$ generates a strongly continuous semigroup of type $\omega$ given by the formula
\begin{equation}
T(t)x = \lim_{n\to\infty}\left(I-\tfrac{t}{n}A\right)^{-n}x
\end{equation}
for all $x\in X$ and $t\geq 0$.
\end{theorem}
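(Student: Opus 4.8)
The plan is to reproduce the classical argument of Crandall and Liggett, organised entirely around the resolvent operators $J_\lambda:=(I-\lambda A)^{-1}$. The first task is to record their elementary properties. Since $A-\omega I$ is m-dissipative, the range condition $\ran\big(I-\alpha(A-\omega I)\big)=X$ together with the algebraic identity $I-\lambda A=(1-\lambda\omega)\big(I-\tfrac{\lambda}{1-\lambda\omega}(A-\omega I)\big)$ shows, for $0<\lambda\omega<1$, that $J_\lambda$ is a single-valued map defined on all of $X$ with $\|J_\lambda\|_{\Lip}\le(1-\lambda\omega)^{-1}$; iterating gives $\|J_\lambda^n\|_{\Lip}\le(1-\lambda\omega)^{-n}$, which upon setting $\lambda=t/n$ produces in the limit the bound $\ee^{\omega t}$ demanded in item (3). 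Moreover, for $x\in D(A)$ and any $y$ with $(x,y)\in A$, a standard dissipativity computation (testing the contraction inequality for $A-\omega I$ against the pair defining $J_\lambda x$) yields the increment estimate $\|J_\lambda x-x\|\le\lambda(1-\lambda\omega)^{-1}\|y\|$, so in particular $J_\lambda x\to x$ as $\lambda\downarrow 0$ on $D(A)$.

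The analytic core is to show that for fixed $t\ge 0$ and $x\in D(A)$ the sequence $J_{t/n}^n x$ is Cauchy. The mechanism is the resolvent identity
\begin{equation*}
J_\lambda z=J_\mu\Big(\tfrac{\mu}{\lambda}z+\big(1-\tfrac{\mu}{\lambda}\big)J_\lambda z\Big),\qquad 0<\mu\le\lambda,
\end{equation*}
which rewrites a step of size $\lambda$ as a $\mu$-step applied to a convex combination. Feeding this into the Lipschitz bound for $J_\mu$ and iterating, I would set up a two-parameter recursion for $a_{n,m}:=\|J_\lambda^n x-J_\mu^m x\|$ and prove by double induction the Crandall--Liggett inequality, bounding $a_{n,m}$ by an expression of the form $C\,\|y\|\,\big[(n\lambda-m\mu)^2+n\lambda^2+m\mu^2\big]^{1/2}$ with $C$ controlled by the $\omega$-factors. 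Specialising $\lambda=t/n$ and $\mu=t/m$, the right-hand side tends to $0$ as $n,m\to\infty$, so $J_{t/n}^n x$ converges; I denote the limit $T(t)x$. The same estimate shows the convergence is uniform for $t$ in compact intervals. It is convenient here to prove the slightly stronger statement that $\lim_{\lambda\downarrow 0}J_\lambda^{\lfloor t/\lambda\rfloor}x=T(t)x$, again uniformly on compacts, since this form makes the semigroup law transparent.

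With $T(t)$ defined on $D(A)$, I would extend it to all of $X=\overline{D(A)}$ using the uniform Lipschitz bounds: the maps $J_{t/n}^n$ are equi-Lipschitz, so the pointwise limit on the dense set $D(A)$ extends uniquely to a Lipschitz map $T(t)$ on $X$ with $\|T(t)\|_{\Lip}\le\ee^{\omega t}$, establishing (1) and (3). The identity $T(0)=I$ is immediate, and strong continuity as $t\to 0$ (item (4)) follows from the increment estimate of the first paragraph combined with the uniform convergence. For the semigroup law $T(t+s)=T(t)T(s)$ (item (2)) I would exploit the $\lfloor t/\lambda\rfloor$-formulation: since $\lfloor(t+s)/\lambda\rfloor-\lfloor t/\lambda\rfloor-\lfloor s/\lambda\rfloor\in\{0,1\}$, the approximants $J_\lambda^{\lfloor(t+s)/\lambda\rfloor}x$ and $J_\lambda^{\lfloor t/\lambda\rfloor}J_\lambda^{\lfloor s/\lambda\rfloor}x$ differ by at most one application of $J_\lambda$, which is negligible by the increment bound; letting $\lambda\downarrow 0$ and using equicontinuity of the family $J_\lambda^{\lfloor t/\lambda\rfloor}$ together with continuity of $T(t)$ to pass the limit through both factors gives the composition law on $D(A)$, hence on $X$ by density.

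The main obstacle is clearly the fundamental a priori inequality of the second paragraph: the resolvent identity and the Lipschitz bounds are routine, but the double-induction estimate controlling $\|J_\lambda^n x-J_\mu^m x\|$ for \emph{mismatched} step sizes is the genuinely delicate, nonlinear heart of the theorem, and it is there that the dissipativity hypothesis is used in full force. Everything else (extension by density, the semigroup axioms, the type estimate) is then bookkeeping built on top of this estimate and the uniform Lipschitz control.
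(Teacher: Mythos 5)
The paper does not prove this theorem at all: it is quoted as a known result with a reference to Crandall and Liggett \cite{crandall-liggett}, and your outline is precisely a reconstruction of that classical argument --- resolvent bounds from the identity $I-\lambda A=(1-\lambda\omega)\bigl(I-\tfrac{\lambda}{1-\lambda\omega}(A-\omega I)\bigr)$, the two-parameter estimate on $\|J_\lambda^n x-J_\mu^m x\|$ via the resolvent identity and double induction, then extension by equi-Lipschitz density and verification of the semigroup axioms. Your proposal is correct in structure and follows the same route as the cited source; the only thing separating it from a complete proof is that the central Crandall--Liggett inequality is asserted (with the correct form and mechanism) rather than carried out, which you yourself flag as the delicate nonlinear core.
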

If $A$ generates a semigroup in this way, we call the abstract Cauchy problem associated to $A$ well-posed on $X$.

Then we have the following result, see Webb \cite[Proposition 4.1]{Webb76}.
\begin{theorem}\label{thm:webb}
If $p>1$, then $\calG-\gamma\calI$ is m-dissipative in $\calE$ for
\begin{equation*}
\gamma=\max\bigl\{0,\tau(0)\bigl(\tfrac{1}{p}+\tfrac{\beta^p}{q}\bigr)+\alpha\bigr\},
\end{equation*}
where $\tfrac{1}{p}+\tfrac{1}{q} = 1$. If $p=1$ and $\beta\leq 1$, then $\calG-\gamma\calI$ is m-dissipative in $\calE$, where $\gamma=\max\{0,\tau(0)+\alpha\}$.
\end{theorem}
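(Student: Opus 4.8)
The statement asserts that $\calA:=\calG-\gamma\calI$ is m-dissipative, so following the definitions recalled above I must establish two things: that $\calA$ is dissipative (equivalently that each resolvent $(\calI-\lambda\calA)^{-1}$ is a well-defined contraction), and that the range condition $\ran(\calI-\lambda\calA)=\calE$ holds, together with $\overline{D(\calG)}=\calE$. Throughout I write $\calU_i=\tbinom{x_i}{f_i}\in D(\calG)$ and set $w:=x_1-x_2\in H$, $\phi:=f_1-f_2\in\W^{1,p}([-1,0];H)$, so that the compatibility condition defining $D(\calG)$ gives the crucial identity $\phi(0)=w$.

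For the dissipativity I would work with the duality map of the weighted norm. Since for $p>1$ the map $\calU\mapsto\tfrac1p\|\calU\|^p$ is Gateaux differentiable, a normalised duality functional $J(\calU)$ acts on $\calV=\tbinom{y}{\psi}$ by $\langle\calV,J(\calU)\rangle=\|\calU\|^{2-p}\big(\|x\|^{p-2}\langle x,y\rangle+\int_{-1}^0|f(\sigma)|^{p-2}\langle f(\sigma),\psi(\sigma)\rangle\tau(\sigma)\dsigma\big)$, exploiting that $H$ is a Hilbert space. Dissipativity of $\calA$ amounts to $\langle\calG\calU_1-\calG\calU_2,J(\calU_1-\calU_2)\rangle\le\gamma\|\calU_1-\calU_2\|^2$, and after clearing the positive scalar $\|\calU_1-\calU_2\|^{2-p}$ this reduces to proving
\[
\|w\|^{p-2}\langle w,Bx_1-Bx_2+\Phi f_1-\Phi f_2\rangle+\int_{-1}^0|\phi(\sigma)|^{p-2}\langle\phi(\sigma),\phi'(\sigma)\rangle\tau(\sigma)\dsigma\le\gamma\Big(\|w\|^p+\int_{-1}^0|\phi(\sigma)|^p\tau(\sigma)\dsigma\Big).
\]
I then estimate the left-hand side by three independent mechanisms. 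The $B$-term is controlled by the $\alpha$-dissipativity of $B$ (which follows from m-dissipativity of $B-\alpha I$), giving $\langle w,Bx_1-Bx_2\rangle\le\alpha\|w\|^2$. The $\Phi$-term is handled by the Lipschitz bound on $g$ together with $\big\|\int_{-1}^0\dd\eta(\sigma)\phi(\sigma)\big\|\le\int_{-1}^0|\phi(\sigma)|\dd\tau(\sigma)$, where $\dd\tau=|\dd\eta|$, yielding a bound $\beta\|w\|^{p-1}\int_{-1}^0|\phi(\sigma)|\dd\tau(\sigma)$. The transport term I rewrite using $|\phi|^{p-2}\langle\phi,\phi'\rangle=\tfrac1p\tfrac{\dd}{\dd\sigma}|\phi|^p$ and integrate by parts against the $\BV$-weight $\tau$; since $\tau(-1)=0$ and $\phi(0)=w$, the boundary contribution is exactly $\tfrac1p\tau(0)\|w\|^p$ while the interior contribution is $-\tfrac1p\int_{-1}^0|\phi(\sigma)|^p\dd\tau(\sigma)$. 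The decisive step is a single application of Young's inequality with conjugate exponents $p,q$ to the mixed quantity $\beta\|w\|^{p-1}|\phi(\sigma)|$, split so that the $|\phi|^p$-part acquires coefficient $\tfrac1p$ and therefore cancels the interior integration-by-parts term; the surviving $\|w\|^p$-part, combined with the $\alpha\|w\|^2$ and $\tfrac1p\tau(0)\|w\|^p$ contributions, assembles into $\gamma\|w\|^p$ with exactly the $\gamma$ of the statement, and since $\|w\|^p\le\|\calU_1-\calU_2\|^p$ the inequality closes (the $\max$ with $0$ absorbing a possibly negative coefficient). For $p=1$ the norm is no longer differentiable, so I would instead use a sign-function (subgradient) bracket; there the cancellation works only when $\beta\le1$ — precisely the extra hypothesis — and the degeneration $q=\infty$ collapses the Young step into the constant $\tau(0)+\alpha$.

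For the range condition it suffices, by dissipativity, to solve $(\mu\calI-\calG)\tbinom{x}{f}=\tbinom{y}{g}$ for every $\tbinom{y}{g}\in\calE$ and all large $\mu$. The second component $\mu f-f'=g$ with $f(0)=x$ is a linear first-order ordinary differential equation on $[-1,0]$ whose unique solution $f=f[x,g]$ is given by an explicit variation-of-constants formula and depends affinely on $x$ through the factor $\ee^{\mu\sigma}$. Substituting $f[x,g]$ into the first component turns it into a single equation $(\mu I-B)x=y+\Phi(f[x,g])$ for $x\in H$. Since $B-\alpha I$ is m-dissipative, $(\mu I-B)^{-1}$ exists and is Lipschitz with constant $(\mu-\alpha)^{-1}$ for $\mu>\alpha$, while the map $x\mapsto(\mu I-B)^{-1}\big(y+\Phi(f[x,g])\big)$ has Lipschitz constant tending to $0$ as $\mu\to\infty$, because the dependence of $f[x,g]$ on $x$ enters only through $\ee^{\mu\sigma}x$, whose contribution to the delay term is $O(1/\mu)$. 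A Banach fixed-point argument then produces the unique $x$, hence $\tbinom{x}{f}\in D(\calG)$ solving the equation. Density of $D(\calG)$ is routine: $\W^{1,p}$ is dense in $\Ell^p$ and $D(B)$ is dense in $H$, and the single constraint $f(0)=x$ can be met by a local modification of $f$ near $\sigma=0$ at arbitrarily small $\Ell^p$-cost.

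I expect the dissipativity estimate — in particular the exact bookkeeping in the integration by parts and the Young step that makes the Stieltjes term $\int|\phi|^p\dd\tau$ cancel and delivers the sharp constant $\gamma$ — to be the main obstacle, the more so because the $H$-valued, $\BV$-weighted, $\Ell^p$-based nature of the norm forces every inequality to be tracked pointwise in $\sigma$ before integration. By comparison the range condition is mechanical, the only genuine care being the uniform-in-$\mu$ contraction estimate for the fixed-point map.
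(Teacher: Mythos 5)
First, a point of orientation: the paper itself contains no proof of this theorem --- it is quoted, with the constant, from Webb \cite[Proposition 4.1]{Webb76} --- so your reconstruction can only be compared against the natural (presumably Webb's) argument, and in outline you have reproduced exactly that: the duality-map reduction for the weighted norm, the three separate estimates, the Stieltjes integration by parts using $\tau(-1)=0$ and $\phi(0)=w$, and the range condition via variation of constants in the second component plus a Banach fixed point for $x$. The range-condition paragraph, the density remark, and the $p=1$ subgradient case (where $\beta\leq 1$ is indeed exactly what makes the cancellation survive) are all sound.

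There is, however, a genuine error in the decisive Young step, and it is not cosmetic. To cancel the interior term $-\tfrac1p\int_{-1}^0\|\phi(\sigma)\|^p\,\dd\tau(\sigma)$ you must apply Young so that the $\|\phi\|^p$-part carries coefficient exactly $\tfrac1p$; since $\beta$ is attached to the factor $\|w\|^{p-1}$, which must then be raised to the \emph{conjugate} power $q$, this forces
\begin{equation*}
\beta\|w\|^{p-1}\|\phi(\sigma)\| \;=\; \bigl(\beta\|w\|^{p-1}\bigr)\cdot\|\phi(\sigma)\| \;\leq\; \frac{\beta^{q}\|w\|^{p}}{q}+\frac{\|\phi(\sigma)\|^{p}}{p},
\end{equation*}
and integrating against $\dd\tau$ the constant your argument actually delivers is $\gamma_0=\max\bigl\{0,\tau(0)\bigl(\tfrac1p+\tfrac{\beta^{q}}{q}\bigr)+\alpha\bigr\}$, with $\beta^{q}$, not the $\beta^{p}$ of the statement; the two agree only when $p=2$ or $\beta\in\{0,1\}$. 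Attaching $\beta$ to the other factor instead gives $\tfrac1q\|w\|^{p}+\tfrac{\beta^{p}}{p}\|\phi\|^{p}$ and destroys the cancellation unless $\beta\leq1$, so no rearrangement of the split produces the printed constant. Moreover the discrepancy cannot be repaired: take $H=\RR$, $B=0$ (so $\alpha=0$), $g(x)=\beta x$, and $\eta$ the unit jump at $\sigma=-1$, so that $\Phi f=\beta f(-1)$ and $\tau\equiv1$ on $(-1,0]$. For differences $(w,\phi)$ with $\phi(0)=w$, $\phi(-1)=tw$, $t\geq 0$, and $\phi$ spiky enough that $\int_{-1}^0\|\phi\|^p\dsigma$ is negligible, the (single-valued, since the norm is Gateaux differentiable for $p>1$) duality inequality forces $\gamma\geq\sup_{t\geq0}\bigl(\beta t-\tfrac{t^{p}}{p}\bigr)+\tfrac1p=\tfrac{\beta^{q}}{q}+\tfrac1p$, i.e.\ $\gamma_0$ is sharp in this norm. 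Consequently, when $\beta^{p}<\beta^{q}$ (for instance $\beta>1$ and $1<p<2$) the constant printed in the theorem is unattainable, while when $\beta^{p}\geq\beta^{q}$ the printed statement follows from yours, since m-dissipativity is preserved upon enlarging $\gamma$. In short: your strategy is the right one, but the sentence claiming the Young step ``assembles into exactly the $\gamma$ of the statement'' is false as written; carried out honestly, your proof establishes the theorem with $\beta^{q}$ in place of $\beta^{p}$, and in doing so exposes what is almost certainly a transposition of the exponents $p$ and $q$ in the constant as quoted from Webb.
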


Let us turn our attention to the splitting procedure described in the introduction.
On the semigroup level in the product space $\calE$ this corresponds to the splitting $\calG=\calA_1+\calA_2$, where
\begin{equation*}
\calA_1 = \begin{pmatrix} B & 0 \\ 0 & 0 \end{pmatrix}, \qquad \calA_2 = \begin{pmatrix} 0 & \Phi \\ 0 & \tfrac{\dd}{\dsigma} \end{pmatrix},
\end{equation*}
with
\begin{equation*}
D(\calA_1)=D(B)\times \Ell^p([-1,0],\tau;H)
\end{equation*}
and
\begin{equation*}
D(\calA_2)=\left\{ \tbinom{x}{f}\in H\times \W^{1,p}([-1,0],\tau;H)\,:\, f(0)=x\right\}.
\end{equation*}
These operators generate strongly continuous semigroups of the form
\begin{equation}\label{eq:split_sgs}
\calT_1(t) = \begin{pmatrix} V(t) & 0 \\ 0 & \widetilde{I} \end{pmatrix}, \qquad \calT_2(t)\binom{x}{f} = \begin{pmatrix} v(t) \\ v_t \end{pmatrix},
\end{equation}
with $\widetilde{I}$ being the identity operator on $\Ell^p([-1,0],\tau;H)$ and
\begin{equation*}
v(t)= x+  \int\nolimits_0^t \Phi v_{r} \dd r,
\end{equation*}
see Webb \cite[Proposition 5.12]{Webb76}. If we denote the projection to the first coordinate by
\begin{equation*}
\mathcal{P}_1\tbinom{x}{f}:=x,
\end{equation*}
then the sequential splitting (after $k$ steps with time step $h$) can be written as
\begin{equation*}
u^{\text{sq}}(hk)=\mathcal{P}_1\left[\left(\calT_1(h)\calT_2(h)\right)^k \tbinom{x}{f}\right],
\end{equation*}
and the Lie splitting analogously as
\begin{equation*}
u^{\text{Lie}}(hk)=\mathcal{P}_1\left[\left((I-h\calA_1)^{-1}(I-h\calA_2)^{-1}\right)^k \tbinom{x}{f}\right]
\end{equation*}
for all $h>0$ sufficiently small.

Our first aim is to show the convergence
$u^{\text{spl}}(hk)\to u(t)=\calP_1 \mathcal{U}(t)$ for $k\to \infty$ and $kh=t$, where $\text{spl}$ stands for $\text{sq}$ or $\text{Lie}$, respectively.

\medskip
The main abstract technical tool in investigating splitting procedures are Lax--Chernoff type theorems and variants of the Lie--Trotter product formula for nonlinear semigroups. Such result were proved in the paper by Brezis and Pazy \cite[Section 3]{Brezis-Pazy}, and improved by Kobayashi \cite{Kobayashi1}. We recall here the  results important for our investigation.
\begin{theorem}\label{thm:BrPa1}
\begin{enumerate}[a)]
\item Suppose that $A_1$, $A_2$, and $\overline{A_1+A_2}$ are $\omega$-m-dissipative sets in a Banach space $X$ generating the semigroups $T_1$, $T_2$, and $T$, respectively. Then
\begin{equation*}
U(t)x=\lim_{n\to\infty} \left[\left(I-\tfrac{t}{n}A_1\right)^{-1}\left(I-\tfrac{t}{n}A_2\right)^{-1}\right]^n x
\end{equation*}
for all $x\in X$.
\item If, in addition, $X=H$ is a Hilbert space and $A_1+A_2$ is already closed, then
\begin{equation*}
T(t)x=\lim_{n\to\infty} \left[T_1\left(\tfrac{t}{n}\right) T_2\left(\tfrac{t}{n}\right)\right]^n x.
\end{equation*}
\end{enumerate}
\end{theorem}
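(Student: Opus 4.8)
The overall plan is to represent $U$ via the Crandall--Liggett exponential formula recalled above and then to compare the approximating products with the genuine resolvent iterates of $\overline{A+B}$. Write $J^A_h:=(I-hA)^{-1}$ and $J^B_h:=(I-hB)^{-1}$; by $\omega$-m-dissipativity these are single-valued and globally Lipschitz with $\|J^A_h\|_{\Lip},\|J^B_h\|_{\Lip}\le(1-\omega h)^{-1}$ for $0<h\omega<1$ (the case $\omega\le0$ being only easier). Put $G(h):=J^A_hJ^B_h$, so that $\|G(h)\|_{\Lip}\le(1-\omega h)^{-2}$ and $G(h)\to I$ as $h\to0$; the object to be controlled in (a) is $G(t/n)^nx$.

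For part (a) I would invoke the nonlinear Chernoff--Kobayashi product formula, whose ingredients are the uniform Lipschitz bound just recorded and a consistency condition tying $G$ to $\overline{A+B}$. Since Crandall--Liggett gives $U(t)x=\lim_n(I-\tfrac{t}{n}\overline{A+B})^{-n}x$, it suffices to show that $G$ generates this same semigroup. The natural consistency statement is that $\tfrac1h(G(h)x-x)$ approaches $(A+B)x$ on $D(A)\cap D(B)$; writing $\tfrac1h(G(h)x-x)=B_hx+A_h(J^B_hx)$ in terms of the Yosida approximations $A_h,B_h$ of $A,B$, whose pointwise limits are the minimal sections $A^0,B^0$, the first term tends to $B^0x$ by the standard resolvent facts, while the second has to be shown to approach $A^0x$. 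This second term is the delicate point, since the Yosida approximations are only $O(1/h)$-Lipschitz, so pointwise consistency is hard to get directly; one instead works with the weaker distance form of Kobayashi's hypothesis and pushes everything through his fundamental difference inequality. Concretely, the technical heart is the double-indexed estimate comparing $u_m:=G(h)^mx$ with $v_k:=(I-\mu\,\overline{A+B})^{-k}x$, derived from the $(1-\omega h)^{-1}$-nonexpansiveness of the factors together with the consistency of $G$; a discrete Gronwall argument then yields $\|G(t/n)^nx-(I-\tfrac{t}{n}\overline{A+B})^{-n}x\|\to0$, uniformly for $t$ in bounded intervals, first for $x\in\overline{D(A)\cap D(B)}$ and then, by the uniform Lipschitz bounds and density, for all $x\in X$. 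This is the main obstacle of the whole proof.

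For part (b) the plan is to reduce to (a) by comparing the semigroup product with the resolvent product $G(h)=J^A_hJ^B_h$. The crucial single-step estimate is that, for $x\in D(A)$, both $S(h)x=x+\int_0^h A^0S(\sigma)x\dsigma$ and $J^A_hx=x+hA_hx$ agree with $x+hA^0x$ up to $o(h)$, and likewise for $T$ and $J^B_h$, so that $\|S(h)T(h)x-G(h)x\|$ is small. The point where a Hilbert space is indispensable is turning this into control of the accumulated error $\|[S(t/n)T(t/n)]^nx-G(t/n)^nx\|$: there the resolvents are firmly nonexpansive and the parallelogram identity yields a summable, $\ell^2$-type bound on the step-errors along the orbit, exactly what fails in a general Banach space. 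Hence the semigroup product shares the limit of the resolvent product, which by (a) equals $U(t)x$; the closedness of $A+B$ enters only to identify $\overline{A+B}=A+B$. As in (a), the genuinely hard step is the passage from single-step consistency to convergence of the $n$-fold products, here via the Hilbert-space error summation.
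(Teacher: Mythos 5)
The paper itself offers no proof of this theorem: it is explicitly \emph{recalled} from the literature (Brezis--Pazy \cite[Section 3]{Brezis-Pazy}, improved by Kobayashi \cite{Kobayashi1}), so your attempt can only be measured against those proofs. Your part (a) reconstructs exactly their strategy --- Crandall--Liggett representation of $U$, a nonlinear Chernoff-type theorem applied to $G(h)=J^A_hJ^B_h$, consistency taken in Kobayashi's weak (resolvent/distance) form rather than pointwise, his double-indexed difference inequality, then density. As written it is a plan rather than a proof, since the two hard steps (verifying the consistency of $G$ and establishing the difference inequality) are named but not carried out; still, it is the correct plan and matches the cited route.

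Part (b) contains a genuine gap, in two places. First, the single-step estimates you treat as given are themselves Hilbert-space theorems: in a general Banach space the Crandall--Liggett semigroup need not be differentiable anywhere and its trajectories are only mild solutions, so neither $S(h)x=x+\int_0^h A^0S(\sigma)x\,\mathrm{d}\sigma$ nor $J^A_hx=x+hA^0x+o(h)$ (convergence of the Yosida approximation to the minimal section) is available; both rest on the Komura--Brezis regularity theory ($\tfrac{\mathrm{d}^+}{\mathrm{d}t}S(t)x=A^0S(t)x$ with $\|A^0S(t)x\|$ nonincreasing), which is where the Hilbert hypothesis first does real work --- contrary to your framing that it is needed ``only'' for accumulating errors. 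Second, the accumulation mechanism you propose (firm nonexpansiveness of the resolvents plus the parallelogram identity giving an $\ell^2$-type bound on step errors) is asserted, not proved, and is not how either reference proceeds: the factors $S(h)$, $T(h)$ are nonexpansive but not firmly nonexpansive, and, more fundamentally, the orbit points $\bigl(S(h)T(h)\bigr)^k x$ need not remain in $D(A)\cap D(B)$, so single-step consistency on a dense set cannot simply be summed along the orbit --- this is precisely the obstruction behind the Kurtz--Pierre counterexample. The cited proofs avoid it by applying the Chernoff-type theorem of part (a) directly to $F(h)=S(h)T(h)$, verifying the resolvent consistency $\bigl(I-\lambda\tfrac{F(h)-I}{h}\bigr)^{-1}z\to\bigl(I-\lambda(A+B)\bigr)^{-1}z$; it is in this verification, via monotonicity, weak compactness and demiclosedness, that the Hilbert structure and the closedness of $A+B$ actually enter. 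Without rerouting your (b) through that machinery, the argument does not close.
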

We shall refer to the case a) as the \emph{Lie-splitting}, and to case b) as the \emph{sequential splitting}. For more general product formulae, see the seminal paper by Lions and Mercier \cite{Lions-Mercier}.

\medskip The extra condition about the Banach space in b) is important, as a counterexample by Kurtz and Pierre \cite{Kurtz-Pierre} shows. Nevertheless, one can fairly relax this condition by requiring that $X$ has uniformly G\^ateaux differentiable norm, see Kobayashi \cite{Kobayashi2}. It seems to be folklore that a separable Banach space can be equivalently renormed such that the new norm has this differentiability property.

\medskip After all these preparations, we obtain the following general convergence result.
\begin{theorem}\label{thm:abstr_spl_delyay}
Suppose that Assumption \ref{ass:1} holds. Consider the delay equation \eqref{delay} and the splitting of the operator $\calG=\calA_1+\calA_2$ described above.
For every $p\in (1,\infty)$ the sequential and the Lie splittings converge in $H\times \Ell^p([-1,0],\tau;H)$.
\end{theorem}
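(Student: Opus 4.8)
The plan is to put the problem into the abstract framework of Theorem \ref{thm:BrPa1} with $A=\calA_1$ and $B=\calA_2$, working throughout in the product space $\calE$ endowed with Webb's weighted norm $\|\binom{x}{f}\|^p=\|x\|^p+\int_{-1}^0|f(\sigma)|^p\tau(\sigma)\dsigma$. The schemes produce sequences $\binom{x_k}{f_k}=[\calT_1(h)\calT_2(h)]^{k}\binom{x}{f}\in\calE$ (for the sequential splitting) and the analogous resolvent products (for the Lie splitting), so the assertion is precisely the convergence of these iterates in $\calE$ as $k\to\infty$ with $kh=t$ fixed. Once this is established, applying the bounded linear, hence Lipschitz, projection $\calP_1$ recovers the intended statement $u^{\mathrm{sq}}(hk)\to\calP_1\calU(t)=u(t)$. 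Since $B$ and $g$ are merely Lipschitz, everything stays within the nonlinear $\omega$-m-dissipative theory, which is exactly the setting of Theorem \ref{thm:BrPa1}.

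First I would check that $\calA_1$, $\calA_2$, and $\calG$ are $\omega$-m-dissipative in the \emph{same} weighted norm. For $\calA_1=\bigl(\begin{smallmatrix}B&0\\0&0\end{smallmatrix}\bigr)$ the resolvent splits as $(\calI-\lambda\calA_1)^{-1}=\bigl(\begin{smallmatrix}(I-\lambda B)^{-1}&0\\0&\widetilde I\end{smallmatrix}\bigr)$; by Assumption \ref{ass:1}(1) the first block is Lipschitz with constant $(1-\lambda\alpha)^{-1}$ and the second is the identity, so a direct estimate in the weighted norm shows $\calA_1$ is $\omega_1$-m-dissipative with $\omega_1=\max\{0,\alpha\}$, the range and density conditions following from those of $B$. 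The operator $\calA_2$ is nothing but the delay generator for the degenerate choice $B=0$, so Webb \cite[Proposition 5.12]{Webb76} (equivalently Theorem \ref{thm:webb} with $\alpha=0$) shows that $\calA_2$ is $\gamma_2$-m-dissipative in the very same norm and generates $\calT_2$, while Theorem \ref{thm:webb} gives that $\calG$ is $\gamma$-m-dissipative. Replacing these constants by a common $\omega:=\max\{\omega_1,\gamma_2,\gamma\}$ (subtracting a larger positive multiple of $\calI$ preserves m-dissipativity) makes all three operators $\omega$-m-dissipative simultaneously.

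The decisive algebraic point is the identification of $\overline{\calA_1+\calA_2}$. Intersecting the domains gives $D(\calA_1)\cap D(\calA_2)=\{\binom{x}{f}\in D(B)\times\W^{1,p}([-1,0],\tau;H):f(0)=x\}=D(\calG)$, and on this common domain the two operator matrices add up exactly to $\calG$. Because $\calG-\gamma\calI$ is m-dissipative, $\calG$ is closed, hence $\overline{\calA_1+\calA_2}=\calG$, the generator of the delay semigroup $U$ solving \eqref{acp_delay}. With $A=\calA_1$, $B=\calA_2$ and $\overline{A+B}=\calG$ all $\omega$-m-dissipative, Theorem \ref{thm:BrPa1}(a) gives convergence of the Lie splitting at once.

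For the sequential splitting I would apply Theorem \ref{thm:BrPa1}(b). Since $\calE$ is not a Hilbert space for $p\neq2$, I use Kobayashi's relaxation \cite{Kobayashi2} recalled above: for $1<p<\infty$ the weighted Bochner space $\Ell^p([-1,0],\tau;H)$ is isometric to an ordinary $\Ell^p(\mu;H)$ and is therefore uniformly convex and uniformly smooth, so with a suitable product norm $\calE$ has uniformly G\^ateaux differentiable norm; as $A+B=\calG$ is moreover closed, part (b) yields $[\calT_1(h)\calT_2(h)]^{k}\binom{x}{f}\to\calU(t)$ in $\calE$, and $\calP_1$ concludes. I expect the main obstacle to be precisely the \emph{simultaneous} m-dissipativity of the two summands in one fixed weighted norm, together with checking that the weight $\tau$, which vanishes at $-1$, still yields a uniformly convex, uniformly smooth norm; this is where the hypothesis $\lim_{\sigma\to-1}\eta(\sigma)\neq0$ enters, as it bounds $\tau$ away from $0$ on $(-1,0]$ and so keeps the weighted norm equivalent to the standard one.
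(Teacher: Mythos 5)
Your proposal is correct and takes essentially the same route as the paper's own (very terse) proof: Webb's Theorem \ref{thm:webb} to get all three operators $\omega$-m-dissipative in the weighted norm, the identification of the closed operator $\calG=\overline{\calA_1+\calA_2}$ as the generator of $\calT$, then Theorem \ref{thm:BrPa1}(a) for the Lie splitting and Kobayashi's uniformly-G\^ateaux-differentiable-norm relaxation of part (b) for the sequential splitting. The details you supply --- the diagonal resolvent of $\calA_1$, viewing $\calA_2$ as the delay generator with $B=0$, closedness of $\calG$ from m-dissipativity, and the equivalence of the weighted and unweighted norms (via $\lim_{\sigma\to-1}\eta(\sigma)\neq 0$) combined with Day/Clarkson to get a uniformly convex, uniformly smooth product norm on $\calE$ --- are exactly the content of the paragraph following Theorem \ref{thm:BrPa1} that the paper's proof invokes.
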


\begin{proof}
By Theorem \ref{thm:webb} the semigroups $\calT_1$, $\calT_2$ and $\calT $ are all of type $\gamma$ for some $\gamma\in \RR$.
Note that for a Hilbert space $H$ and $1<p<\infty$, the norm of the $H$-valued Bochner space  $\Ell^p(H)$ is uniformly G\^ateaux differentiable (by a result of Day \cite{day1} the space $\Ell^p(H)$ is uniformly convex and uniformly smooth, which is even more than what is required; see also Diestel \cite[Sec.2.3]{diestel}). All in all we note that the above Theorem \ref{thm:BrPa1} is applicable in the situation of the delay equation, i.e., for the semigroups $\calT_1$ and $\calT_2$ on the product space $\calE=H\times \Ell^p([-1,0],\tau;H)$,  as described above. Notice that one has to endow the product space with an appropriate smooth and  uniformly convex product norm (see Clarkson \cite{Clarkson}).
Since $\calA_1+\calA_2$ generates $\calT$, Theorem \ref{thm:BrPa1} and the paragraph thereafter yield the proof.
\end{proof}

\section{Order of convergence}\label{sec:order_conv}

The convergence result we obtained in the previous section is rather general. Unfortunately, it is usually not possible to obtain estimates on the error of convergence in this generality. Hence, we turn our attention to some important special cases and prove convergence of the sequential splitting together with error estimates. In this section we suppose that operator $B$ generates a linear contraction semigroup. Further analysis of more complicated equations is subject to ongoing research.

\medskip
First we show how an abstract semigroup result can be applied directly to the problem, demonstrating the power of the semigroup approach.
To obtain some error estimates in the linear case, i.e., when $g(x)=x$, the identity, one can apply a general result by Hansen and Ostermann \cite[Theorem 3.1]{Hansen-Ostermann}.

\begin{theorem}\label{thm:previous}
Assume that $B$ generates a linear contraction $C_0$-semigroup and that the linear operator $\Phi$ satisfies the condition $\ran \Phi \subset D(B)$. Then for $\tbinom{x}{f}\in D(\calG^2)$ the sequential splitting {is of first order}.
\end{theorem}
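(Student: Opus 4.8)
The plan is to deduce the statement directly from Theorem~\ref{Thm:Hansen-Ostermann} applied to the splitting $\calG=\calA_1+\calA_2$. Accordingly the work reduces to verifying its hypotheses: that $\calA_1$ and $\calA_2$ generate linear contraction $C_0$-semigroups on $\calE$, that $\calG=\overline{\calA_1+\calA_2}$ generates $\calT$, and --- the only substantial point --- the domain condition \eqref{eq:cond_osteramnn}, $D(\calG^2)\subset D(\calA_1\calA_2)$. The hypothesis $\ran\Phi\subset D(B)$ is tailored exactly to make this last inclusion hold.

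First I would settle the generation claims. Since $B$ generates a linear contraction semigroup $V$ and the product norm decouples the two coordinates, $\calA_1$ generates the diagonal semigroup $\calT_1$ of \eqref{eq:split_sgs}, and it is a contraction because $\|\calT_1(t)\tbinom{x}{f}\|^p=\|V(t)x\|^p+\int_{-1}^0|f(\sigma)|^p\tau(\sigma)\dsigma\le\|\tbinom{x}{f}\|^p$. By \eqref{eq:split_sgs} the operator $\calA_2$ generates $\calT_2$, which by Theorem~\ref{thm:webb} (with the $B$-entry set to $0$) is of some type $\gamma\ge0$. To meet the contraction requirement I would replace $\calA_2$ by $\calA_2-\gamma\calI$, generating the contraction $\ee^{-\gamma\cdot}\calT_2$; the sequential scheme then merely acquires the common scalar factor $\ee^{-\gamma h}$ in $\calT_1(h)\ee^{-\gamma h}\calT_2(h)$ and in $\ee^{-\gamma h}\calT(h)$, so, $\ee^{-\gamma h}$ being bounded as $h\to0$ and the graph norms of $(\calG-\gamma\calI)^2$ and $\calG^2$ being equivalent, this rescaling does not affect the order. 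Finally $\calA_1+\calA_2=\calG$ on $D(\calA_1)\cap D(\calA_2)=D(\calG)$, and $\calG$ is already closed, as recorded in the proof of Theorem~\ref{thm:abstr_spl_delyay}; hence $\overline{\calA_1+\calA_2}=\calG$.

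The core of the proof is the inclusion $D(\calG^2)\subset D(\calA_1\calA_2)$, for which it suffices to show $D(\calG)\subset D(\calA_1\calA_2)$. Take $\tbinom{x}{f}\in D(\calG)$, so $f\in\W^{1,p}([-1,0];H)$ and $x=f(0)\in D(B)$. Then $\calA_2\tbinom{x}{f}=\binom{\Phi f}{f'}$, and by the hypothesis $\ran\Phi\subset D(B)$ we have $\Phi f\in D(B)$, while $f'\in\Ell^p$ holds automatically; therefore $\binom{\Phi f}{f'}\in D(B)\times\Ell^p([-1,0],\tau;H)=D(\calA_1)$, i.e.~$\tbinom{x}{f}\in D(\calA_1\calA_2)$. (Indeed, since $\Phi$ is $D(B)$-valued on all of $\W^{1,p}$, the same computation even gives $D(\calA_1\calA_2)=D(\calA_2)$, and the analogous check works verbatim for $\calA_2-\gamma\calI$.) This is precisely \eqref{eq:cond_osteramnn}, so Theorem~\ref{Thm:Hansen-Ostermann} yields the first-order estimate for $\calT_1(h)\calT_2(h)$ on $D(\calG^2)$.

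The genuinely short algebraic content is thus that $\ran\Phi\subset D(B)$ collapses the abstract condition \eqref{eq:cond_osteramnn} into the trivial $D(\calG)\subset D(\calA_2)$. The step that actually requires care --- and the main obstacle to a completely clean reduction --- is reconciling the type-$\gamma$ semigroups $\calT_2$ and $\calT$ with the contraction hypothesis of Theorem~\ref{Thm:Hansen-Ostermann}: one has to confirm that the scalar rescaling by $\ee^{-\gamma h}$ leaves the local $\calO(h^2)$ error, and hence the first order of the global scheme, intact.
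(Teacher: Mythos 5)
Your proposal is correct and follows essentially the same route as the paper: both reduce the statement to Theorem~\ref{Thm:Hansen-Ostermann} and verify the domain condition \eqref{eq:cond_osteramnn} from the hypothesis $\ran\Phi\subset D(B)$, via $D(\calG^2)\subset D(\calG)\subset D(\calA_2)=D(\calA_1\calA_2)$. The one point where you go beyond the paper is the contraction hypothesis: the paper simply asserts that each $\calA_i$ generates a contraction semigroup ``by Theorem~\ref{thm:webb}'', although that theorem only yields quasi-contractivity (type $\gamma$) for $\calA_2$; your rescaling $\calA_2\mapsto\calA_2-\gamma\calI$, together with the observation that the scalar factor $\ee^{-\gamma h}$ does not affect the order, is a legitimate repair of this gap.
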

Here the symbol $\ran \Phi$ denotes the range of the operator $\Phi$.
\begin{proof}
To prove the desired estimate, we have to check the conditions in Hansen and Ostermann \cite[Theorem 3.1]{Hansen-Ostermann} for the operators
\begin{equation*}
\calA_1 = \begin{pmatrix} B & 0 \\ 0 & 0 \end{pmatrix}, \qquad \calA_2 = \begin{pmatrix} 0 & \Phi \\ 0 & \tfrac{\dd}{\dsigma} \end{pmatrix},
\end{equation*}
with their respective domains, see Section \ref{sec:delay_sg}.

By Theorem \ref{thm:webb}, we know that each operator $\calA_i$ generates a quasi-contraction semigroup in $\calE$. Hence, we only have to check the domain condition
\begin{equation*}
D(\calG^2)\subset D(\calA_1\calA_2).
\end{equation*}

Consider first the right-hand side of this inclusion. Then
\begin{align*}
D(\calA_1\calA_2) &= \big\{\tbinom{x}{f}\in D(\calA_2)\,:\, \calA_2\tbinom{x}{f}\in D(\calA_1)\big\}\\
&=\big\{\tbinom{x}{f}\in D(\calA_2)\,:\, \Phi f \in D(B)\big\} = D(\calA_2)
\end{align*}
since $\ran\Phi\subset D(B)$.

For the left-hand side we obtain
\begin{multline*}
D(\calG^2) = \big\{\tbinom{x}{f}\in D(\calG)\,:\, \calG\tbinom{x}{f}\in D(\calG)\big\} = \big\{ \tbinom{x}{f}\in D(\calG)\,:\,\tbinom{Bx+\Phi f}{f'}\in D(\calG)\big\} = \\ \big\{\tbinom{x}{f}\in D(B)\times \W^{2,p}([-1,0],\tau;H)\,:\, f(0)=x, \, f'(0) = Bx+\Phi f\in D(B) \big\}.
\end{multline*}

\noindent Hence,
\begin{equation*}
D(\calG^2)\subset D(\calG) \subset D(\calA_2) = D(\calA_1\calA_2),
\end{equation*}
and the assertion is proved.
\end{proof}
We remark that Hansen and Stillfjord in their paper \cite{Hansen-Stillfjord} investigated Lie splitting for abstract nonlinear delay equations and obtained analogous results to ours.

\medskip Now we want to weaken the assumption about $\ran\Phi$ and still obtain a convergence rate result. To do so we directly calculate the local error of the splitting. The splitting procedure is certainly stable, as all appearing semigroups are of type $\gamma$.  After renorming the space $\calE$ as described in the paragraph preceding Theorem \ref{thm:webb} both  $\calT_1$ and $\calT_2$ become quasi-contraction semigroups of type $\gamma$, and thus in the new norm the splitting is stable. Hence (for any equivalent norm) there is a constant $\widetilde M\geq 1$ with
\begin{equation}\label{eq:stab}
\|(\calT_1(t)\calT_2(t))^k\|_{\Lip}\leq \widetilde M\ee^{2\gamma kt}\quad\mbox{for all $t\geq 0$, $k\in\NN$}.
\end{equation}
More precisely, for the original norm the constant $\widetilde M$ can be obtained from \eqref{eq:const_eq_norm} as
$$
\widetilde M = \left(\frac{\max\{1,\tau(0)\}}{\min\{1,\tau(-1)\}}\right)^{\frac{1}{p}}.
$$
Recall that $\tau(-1)$ is (in modulus) the jump of the delay operator at $-1$ (ensuring that we really have a delay equation), and $\tau(0)$ is the total variation of $\eta$.

\medskip
Further, by the calculations in Section \ref{sec:delay_sg} and by the results in B\'{a}tkai and Piazzera \cite[Section~3.1]{Batkai-Piazzera}, we know that the semigroup $\calT$ generated by $(\calG,D(\calG))$ is given by
\begin{equation}\label{eq:delay_sq}
\calT(t)\tbinom{x}{f} = \begin{pmatrix} u(t) \\ u_t \end{pmatrix},
\end{equation}
and we have
\begin{equation}\label{eq:vtvt}
u_t(\sigma)=\begin{cases}
u(t+\sigma)&\mbox{if $t+\sigma>0$},\\
f(t+\sigma)&\mbox{if $t+\sigma\leq 0$}.
\end{cases}
\end{equation}

\medskip Here is now the assumption replacing the range condition. The price we shall pay for weakening the assumption is, however, that we will have to impose more regularity on the initial condition.
\begin{assumption}\label{ass:new1}
\begin{enumerate}[a)]
\item Suppose that $B$ generates a linear contraction semigroup $V$ on $H$.
\item %
Suppose that the linear operator
\begin{equation*}
\Psi:\Ce([-1,0];H)\to H
\end{equation*}
is given by
 $$
 \Psi f=\int\nolimits_{-1}^0 \dd \eta(\sigma) f(\sigma),
 $$
 with $\eta:[1,0]\to \LLL(H)$ of bounded variation, such that for some $-1<\sigma_0<0$
 $$
 \eta:[\sigma_0,0]\to \LLL(H)\quad\mbox{is Lipschitz continuous}.
 $$
 (Note that $\Psi$ can be considered as a bounded operator $\Psi:\W^{1,r}([-1,0];H)\to H$ for each $r\in[1,\infty)$.)
\item Suppose  that for $x\in D(B)$ we have $\eta(s)x\in D(B)$ for all $s\in [-1,0]$. Moreover, suppose that
 $\eta:[-1,0]\to \LLL(D(B))$ is of bounded variation such that for the same $-1<\sigma_0<0$ we have
$$
\eta:[\sigma_0,0]\to \LLL(D(B))\quad\mbox{is Lipschitz continuous}.
$$
\item Suppose further that
\begin{equation*}
    \eta(-1)=0 \text{ and that } \lim_{\sigma\to -1} \eta(\sigma)\neq 0.
    \end{equation*}
\item Suppose  that the function $g:H\to H$ leaves $D(B)$ invariant and is globally Lipschitz both on $H$ and $D(B)$, with Lipschitz constant $\beta$. We set $\Phi=g\circ\Psi$.
\end{enumerate}
\end{assumption}
Informally, the conditions in Assumption \ref{ass:new1} mean that $\Phi$ maps $D(B)$-valued functions into $D(B)$ and that there are no point delays accumulating around zero, only distributed delays.
Note that if the operator $\Phi$ is for example of the form $\Phi = g\circ\delta_{-1}$, then this condition is always satisfied, while the conditions of the previous Theorem \ref{thm:previous}  are not.

In what follows, we abbreviate the norm notation $\|\cdot\|_{\Ell^p([-1,0];D(B))}$ to $\|\cdot\|_{\Ell^p(D(B))}$, and similarly to all function spaces defined on $[-1,0]$ with values in $H$ or $D(B)$.

\begin{proposition}\label{prop:convlinDB}
Let $p_0>1$ and let $p\in (1,p_0)$,  and suppose that the conditions in Assumption \ref{ass:new1} hold.
Define
$$
\calD_p:=\bigl\{\tbinom{x}{f}\in D(B^2)\times \W^{1,p}([-1,0];D(B)), \:f\in \Lip([-1,0];H),\: f(0)=x\bigr\}.
$$
Then for every $T_{m}>0$ there is a constant $C>0$ independent of $p\in (1,p_0)$  such that for every  $\tbinom{x}{f}\in \calD_p$ and
$$
\bigl\|\bigl(\calT_1(\tfrac{t}{n})\calT_2(\tfrac{t}{n})\bigr)^n\tbinom{x}{f}-\calT(t)\tbinom{x}{f}\bigr\|\leq \frac{Ct^{1+1/p}}{n^{1/p}}\bigl(1+\bigl\|\tbinom{x}{f}\bigr\|_{\calD_p} \bigr),
$$
holds for all $t\in [0,T_{m}]$ and $n\in\NN$,
where for $\tbinom xf\in {\calD_p}$ one defines
$$
\bigl\|\tbinom xf\bigr\|_{\calD_p}:=\|x\|_B+\|Bx\|_B+\|f\|_{\W^{1,p}(D(B))}+\|f\|_{\Lip(H)}.
$$
\end{proposition}

\begin{remark}
Let us  fix a time level $T_{m}$, and set $h=\frac {T_{m}} n$ for some $n\in \NN$ and $K={C}{T_{m}}$.
For $j=1,\dots,n$ and $t_j=jh\in [0,T_{m}]$ we have by Proposition \ref{prop:convlinDB}
\begin{align*}
&\bigl\|\bigl(\calT_1(\tfrac{t_j}{j})\calT_2(\tfrac{t_j}{j})\bigr)^j\tbinom{x}{f}-\calT(t_j)\tbinom{x}{f}\bigr\|\leq\frac{Ct_j^{1+1/p}}{j^{1/p}} \bigl(1+\bigl\|\tbinom{x}{f}\bigr\|_{\calD_p} \bigr)\\
&\quad\quad\quad\quad=\frac{Ch^{1/p}t_j^{1+1/p}}{t_j^{1/p}}\bigl(1+\bigl\|\tbinom{x}{f}\bigr\|_{\calD_p} \bigr)\leq K h^{1/p} \bigl(1+\bigl\|\tbinom{x}{f}\bigr\|_{\calD_p} \bigr),
\end{align*}
which leads to
\begin{equation*}
\max_{j=1,\dots,n}
\bigl\|\bigl(\calT_1(h)\calT_2(h)\bigr)^j\tbinom{x}{f}-\calT(jh)\tbinom{x}{f}\bigr\|\leq K h^{1/p} \bigl(1+\bigl\|\tbinom{x}{f}\bigr\|_{\calD_p} \bigr) ,\quad T_{m}=nh.
\end{equation*}
\end{remark}
\begin{theorem}\label{thm:convlinDB}
Suppose that the conditions in Assumption \ref{ass:new1} hold. Then the sequential splitting is of first order. More precisely, let $x\in D(B^2)$ and $f\in \W^{1,p_0}([-1,0];D(B))\cap\Lip([-1,0];H)$ for some $p_0>1$ with $f(0)=x$.
 Then for every $T_{m}>0$ there is a constant $C>0$  such that
$$
\max_{j=1,\dots, n}\|u^{\text{\rm sq}}(jh)-u(jh)\|\leq Ch\bigl(1+\|x\|_B+\|Bx\|_B+\|f\|_{\W^{1,1}(D(B))}+\|f\|_{\Lip(H)}\bigr)
$$
for $T_m=nh$,  and $n\in\NN$ sufficiently large.
\end{theorem}
\begin{proof}
We use that all appearing semigroups are consistent on the $\Ell^p$-scale, and that we have the formulae $u^{\text{\rm sq}}(jh)=\calP_1(\calT_1(h)\calT_2(h))^j\binom xf$ and $u(jh)=\calP_1\calT(jh)\binom xf$. The assertion follows from Proposition \ref{prop:convlinDB} by letting $p\to 1$ and by taking the $p$-independence of the appearing constants into account.
\end{proof}
The proof of  Proposition  \ref{prop:convlinDB} follows the standard route of stability analysis and estimating the local error using the variation of constants formula.
These are carried out in a series of lemmas
where we first show the invariance of the subspace $\calD_p$ under the delay semigroup, and then show the local error estimate for initial values from this set.

In what follows, we shall work with a fixed $p>1$ and use the notation $\calD$ instead of $\calD_p$. The reader will have no difficulty of keeping track of the $p$-independence of the appearing constants. Most notably, the Lipschitz constants of the semigroups (i.e., $M$ and the type $\gamma$) will not depend on $p\in (1,p_0)$ for a fixed $p_0>1$, see Theorem \ref{thm:webb}.)


As before, denote by $\calP_1:\calE\to H$ and $\calP_2:\calE\to \Ell^p([-1,0];H)$ the coordinate projections.

\begin{lemma}\label{lem:lipOK}
Let $\tbinom{x}f\in D(\calG)$ be such that $f\in\Lip([-1,0];H)$ with Lipschitz constant $L$, and let $T_{m}>0$. Then for all $t\in [0,T_{m}]$ the function $u_t:=\calP_2\calT(t)\tbinom{x}f$ is Lipschitz continuous with constant
$$
\max\bigl\{L,M\bigl\|\calG\tbinom{x}f\bigr\|\bigr\},
$$
with $M$ depending only on $\calT$ and $T_{m}$.
\end{lemma}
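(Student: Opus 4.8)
The plan is to read off $u_t$ from the explicit description \eqref{eq:vtvt} and to split its domain $[-1,0]$ at the point $-t$. Write $z:=\tbinom{x}{f}$ and $u(s):=\calP_1\calT(s)z$. Since $t\in[0,1]$ we have $-t\in[-1,0]$, so \eqref{eq:vtvt} reads
\[
u_t(\sigma)=\begin{cases} f(t+\sigma), & \sigma\in[-1,-t],\\ u(t+\sigma), & \sigma\in[-t,0].\end{cases}
\]
Thus $u_t$ is the concatenation of a time-shift of $f$ on $[-1,-t]$ and of a time-shift of the first coordinate $u$ of the orbit, the latter evaluated on $[0,t]\subseteq[0,1]$. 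On the first piece the Lipschitz constant is that of $f$, namely $L$; on the second piece it coincides with the Lipschitz constant of $s\mapsto u(s)$ on $[0,t]$. The two pieces agree at the junction $\sigma=-t$, because $f(0)=x=u(0)$ by the compatibility condition built into $z\in D(\calG)$, so $u_t$ is continuous there.

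Next I would bound the Lipschitz constant of the second piece, i.e.\ of $u$ on $[0,1]$. Since $z\in D(\calG)$ and, by Theorem \ref{thm:webb}, $\calT$ is a strongly continuous semigroup of type $\gamma$, the orbit $s\mapsto\calT(s)z$ is Lipschitz on $[0,1]$. In the present linear situation this is immediate from $\tfrac{\dd}{\dd s}\calT(s)z=\calT(s)\calG z$ together with the type-$\gamma$ bound $\|\calT(\rho)w\|_\calE\le M_0\ee^{\gamma\rho}\|w\|_\calE$: for $s,r\in[0,1]$,
\[
\|u(r)-u(s)\|_H=\Bigl\|\calP_1\!\int_s^r\calT(\rho)\calG z\,\dd\rho\Bigr\|_H\le \int_s^r\|\calT(\rho)\calG z\|_\calE\,\dd\rho\le |r-s|\,M_0\ee^{\max\{0,\gamma\}}\,\|\calG z\|_\calE,
\]
where I used that $\calP_1$ has norm at most $1$ (the product norm dominates the first coordinate). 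Setting $M:=M_0\ee^{\max\{0,\gamma\}}$, a constant depending only on $\calT$, the second piece is Lipschitz with constant at most $M\|\calG z\|_\calE\le M\|z\|_\calG$.

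Finally I would glue the two estimates. A function that is Lipschitz with constants $L_1$ and $L_2$ on two abutting intervals and continuous at their common endpoint is globally Lipschitz with constant $\max\{L_1,L_2\}$: given two points on opposite sides of the junction, insert the junction point and apply the triangle inequality. With $L_1=L$ and $L_2\le M\|z\|_\calG$ this yields precisely the claimed bound $\max\{L,M\|z\|_\calG\}$.

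The main obstacle is the second step, namely securing the orbit-Lipschitz estimate with a constant independent of $t$ and of $z$ and depending only on $\calT$. This rests on the type-$\gamma$ property from Theorem \ref{thm:webb}; for a general (nonlinear) $\omega$-m-dissipative generator the same conclusion follows instead from the fundamental regularity estimate $\|\calT(t)z-\calT(s)z\|\le|t-s|\,\ee^{\gamma(t\vee s)}\,\|\calG z\|$ valid for $z\in D(\calG)$. The remaining ingredients — the piecewise description of $u_t$, the contractivity of $\calP_1$, and the concatenation of Lipschitz constants — are elementary.
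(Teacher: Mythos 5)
Your proposal is correct and takes essentially the same route as the paper's own proof: both split $u_t$ via \eqref{eq:vtvt} into $f(t+\cdot)$ on $[-1,-t]$ and $u(t+\cdot)$ on $[-t,0]$, bound the Lipschitz constant of the second piece by $M\|\calG\tbinom{x}{f}\|$ through the derivative $\calP_1\calT(\cdot)\calG\tbinom{x}{f}$ of the classical orbit, and glue at $\sigma=-t$ using $f(0)=x$. Your write-up merely makes the integral representation of the orbit increment and the max-of-constants gluing argument more explicit than the paper does.
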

\begin{proof}
Since $u(s)=\calP_1\calT(s)$, by \eqref{eq:vtvt} and by Miyadera \cite[Theorem 4.2]{miyadera:77} we obtain that
$$
u_t:[\max\{-1,-t\},0]\to H,\quad u_t(\sigma)=u(t+\sigma)
$$
is Lipschitz continuous on $[\max\{-1,-t\},0]$  with constant
$M\|\calG\tbinom{x}f\|$ with $M$ depending only on $\calT$ and $T_{m}$. Now, if $t\in[0,1]$, since $f$ is assumed to be Lipschitz continuous and since $f(0)=x=u_t(-t)$ (for $\tbinom xf\in D(\calG)$), and also by taking into account \eqref{eq:vtvt} again, it follows that $u_t$ is Lipschitz continuous with the asserted constant.
\end{proof}

\begin{lemma}\label{lem:diff_u_t}
Let $\tbinom{x}{f}\in \calE$,  let $v_t:=\calP_2(\calT_2(t)\tbinom{x}{f})$ and  $u_t:=\calP_2(\calT(t)\tbinom{x}{f})$.
Then for all $t\in [0,-\sigma_0]$ we have
$$
\Phi v_t=\Phi u_t+\calO(t),
$$
where $\calO(t)$ denotes a term of magnitude $t\cdot C (\|x\|+\|f\|_{\Ell^p(H)})$ with a constant $C$ independent of $x$ and $f$.
\end{lemma}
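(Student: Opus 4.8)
The plan is to reduce the difference $\Phi v_t-\Phi u_t$ to an integral over the short interval $(-t,0]$ on which the two history functions genuinely differ, and then to exploit the Lipschitz continuity of $\eta$ near zero granted by Assumption \ref{ass:new1}. Using the integral representation of $\Phi$ together with the explicit description of the history functions in \eqref{eq:vtvt}---which applies verbatim to $v_t$ as well, since $\calT_2$ is itself a delay semigroup, namely the one generated by the matrix with $B=0$---I would first write
$$
\Phi v_t-\Phi u_t=\int_{-1}^0 \dd\eta(\sigma)\bigl(v_t(\sigma)-u_t(\sigma)\bigr).
$$
For $\sigma\in[-1,-t]$ one has $t+\sigma\leq 0$, so both $v_t(\sigma)$ and $u_t(\sigma)$ equal $f(t+\sigma)$ and the integrand vanishes. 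Hence only the contribution of $\sigma\in(-t,0]$ survives, where $v_t(\sigma)-u_t(\sigma)=v(t+\sigma)-u(t+\sigma)$ with $v(s)=\calP_1\calT_2(s)\tbinom xf$ and $u(s)=\calP_1\calT(s)\tbinom xf$.

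The decisive point is that, since $t\leq -\sigma_0$, the interval $[-t,0]$ lies inside $[\sigma_0,0]$, where by Assumption \ref{ass:new1} the function $\eta$ is Lipschitz continuous with some constant $K$; consequently the total variation of $\eta$ over $[-t,0]$ is at most $Kt$. Estimating the surviving integral gives
$$
\|\Phi v_t-\Phi u_t\|\leq \sup_{s\in[0,t]}\|v(s)-u(s)\|\cdot\int_{-t}^0|\dd\eta(\sigma)|\leq Kt\cdot\sup_{s\in[0,t]}\|v(s)-u(s)\|.
$$
It then remains to bound the supremum by a constant multiple of $\max\{\|f\|_{\Ell^p(H)},\|x\|\}$, and here I would simply invoke stability: both $v$ and $u$ are first coordinates of semigroups of type $\gamma$ (Theorem \ref{thm:webb}), so on the compact parameter range $s\in[0,-\sigma_0]$ each is bounded by $\ee^{\gamma(-\sigma_0)}\|\tbinom xf\|_\calE$, and $\|\tbinom xf\|_\calE$ is in turn controlled by $\max\{\|x\|,\|f\|_{\Ell^p(H)}\}$ up to an absolute constant. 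Combining the two displays yields $\Phi v_t-\Phi u_t=\calO(t)$ in the required sense, with the constant independent of $x$ and $f$.

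The main obstacle---and the only place where Assumption \ref{ass:new1} is genuinely used---is the passage from $\int_{-t}^0|\dd\eta|$ to the factor $Kt$. If $\eta$ failed to be Lipschitz near zero, for instance if $\Phi$ carried a point mass at $\sigma=0$ contributing a term $c\,v_t(0)=c\,v(t)$, then the difference would contain $c\bigl(v(t)-u(t)\bigr)$, and for a general initial value $x\notin D(B)$ this is only $o(1)$ rather than $\calO(t)$, so the first-order gain would be lost. It is precisely the absence of point delays accumulating near zero that converts the short integration range into one extra power of $t$; everything else is a routine consequence of the contraction/type-$\gamma$ estimates already established.
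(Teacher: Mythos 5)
Your proof is correct and takes essentially the same route as the paper's: split the Stieltjes integral at $-t$, note that both history functions coincide with $f$ on $[-1,-t]$ so that part vanishes, and bound the remaining integral over $[-t,0]$ by combining the Lipschitz continuity of $\eta$ on $[\sigma_0,0]$ (giving the factor $t$) with the type-$\gamma$ stability bounds on $\calT$ and $\calT_2$ (giving the factor $C\max\{\|f\|_{\Ell^p(H)},\|x\|\}$). Your closing discussion of why a point mass at $\sigma=0$ would destroy the first-order gain is a nice piece of insight, but it is commentary rather than a difference in method.
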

\begin{proof}
Similarly to \eqref{eq:vtvt} we have
$$
v_t(\sigma)=\begin{cases}
v(t+\sigma)&\mbox{if $t+\sigma>0$},\\
f(t+\sigma)&\mbox{if $t+\sigma\leq 0$}.
\end{cases}
$$
Using that $g$ is Lipschitz continuous with constant $\beta$, we can write
\begin{align*}
\|\Phi &u_t-\Phi v_t\|\leq \beta \Bigl\|\int\nolimits_{-1}^0 \dd\eta(\sigma) \left(u_t(\sigma)-v_t(\sigma)\right)\Bigr\|\\
&\leq \beta\int\nolimits_{-1}^{-t}\| u_t(\sigma)-v_t(\sigma)\| \cdot\|\dd\eta(\sigma)\|+\beta\int\nolimits_{-t}^{0}\| u(t+\sigma)-v(t+\sigma)\| \cdot\|\dd\eta(\sigma)\|\\
&= \beta\int\nolimits_{-1}^{-t}\hskip-0.4em\| f(t+\sigma)-f(t+\sigma)\|\cdot \|\dd\eta(\sigma)\|+\beta\int\nolimits_{-t}^{0}\hskip-0.4em\| u(t+\sigma)-v(t+\sigma)\| \cdot\|\dd\eta(\sigma)\|\\
&=0+\beta\int\nolimits_{-t}^{0}\| u(t+\sigma)-v(t+\sigma)\| \cdot\|\dd\eta(\sigma)\| \leq \beta\int\nolimits_{-t}^{0}\hskip-0.4em 2M(\|x\|+\|f\|_{\Ell^p(H)})\cdot\|\dd\eta(\sigma)\|
\intertext{where $M$ depends on the bounds of the semigroups $\calT$, $\calT_2$. Since $\eta$ is Lipschitz continuous, we can further estimate}
&\leq 2M\beta \|\eta\|_{\Lip[\sigma_0,0]}(\|x\|+\|f\|_{\Ell^p(H)}) t.\qedhere
\end{align*}
\end{proof}

\begin{lemma}\label{lem:inv_lip}
\begin{enumerate}[a)]
\item The subspace $\calD$ is invariant under the semigroup $\calT$.
\item For $\tbinom{x}{f}\in \calD$ the function
$$
[0,-\sigma_0]\ni s\mapsto \Phi u_s\in H
$$
is Lipschitz continuous with constant $C(1+\|x\|_B+\|f\|_{\Lip(H)}+\|f\|_{\W^{1,p}(H)})$.
\item For $\tbinom{x}{f}\in \calD$ the function
$$
[0,-\sigma_0]\ni s\mapsto \Phi u_s\in D(B)
$$
is bounded in norm, and the function
$$
[0,-\sigma_0]\ni s\mapsto V(s)\Phi u_s\in H
$$
is Lipschitz continuous with constant $C(1+\|x\|_B+\|f\|_{\W^{1,p}(D(B))}+\|f\|_{\Lip(H)}+\|Bx\|_B)$, where the constant $C$ does not depend on $x$ and $f$.
\end{enumerate}
\end{lemma}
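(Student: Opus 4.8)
The plan is to handle the three assertions in turn, the central device being to realise the delay semigroup not only on $\calE$ but also on the smaller Hilbert space $\calE_B:=D(B)\times\Ell^p([-1,0];D(B))$, where $D(B)$ carries its graph norm $\|\cdot\|_B$ (a Hilbert norm, since $B$ is closed).

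For a) I would first note that, as $B$ generates a linear contraction semigroup $V$ on $H$, the restriction $V|_{D(B)}$ is again a contraction $C_0$-semigroup on $(D(B),\|\cdot\|_B)$: strong continuity and $\|V(t)y\|_B\le\|y\|_B$ follow at once from $BV(t)=V(t)B$ and the contractivity of $V$, its generator being the part of $B$ in $D(B)$, with domain $D(B^2)$. Assumption \ref{ass:new1} b) says exactly that $\Phi$, through the $\LLL(D(B))$-valued function $\eta$, satisfies on $\calE_B$ the hypotheses that Theorem \ref{thm:webb} requires on $\calE$; hence $\calG_B-\gamma_B\calI$ is m-dissipative on $\calE_B$ and generates a delay semigroup $\calT_B$, whose domain is $D(\calG_B)=\{\tbinom xf\in D(B^2)\times\W^{1,p}([-1,0];D(B)):f(0)=x\}$. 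By uniqueness of solutions of the delay Cauchy problem, $\calT_B(t)$ is the restriction of $\calT(t)$ to $\calE_B$, so for $\tbinom xf\in\calD\subseteq D(\calG_B)$ one gets $\calT(t)\tbinom xf=\calT_B(t)\tbinom xf\in D(\calG_B)$, i.e. $u(t)\in D(B^2)$, $u_t\in\W^{1,p}([-1,0];D(B))$ and $u_t(0)=u(t)$. It then remains only to propagate the Lipschitz regularity of the history: for $t\in[0,1]$ this is precisely Lemma \ref{lem:lipOK}, and for $t\ge1$ it follows because $u_t(\sigma)=u(t+\sigma)$ with $u\in\Ce^1([0,\infty);H)$. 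Together these show $\calD$ is $\calT$-invariant.

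For b) I would estimate $s\mapsto\Phi u_s$ pointwise. Writing $u_s$ via \eqref{eq:vtvt}, I compare $u_{s+r}(\sigma)$ and $u_s(\sigma)$ on the three $\sigma$-regions fixed by the signs of $s+\sigma$ and $s+r+\sigma$: where both are nonpositive the difference is $f(s+r+\sigma)-f(s+\sigma)$, bounded by $\|f\|_{\Lip(H)}\,r$; where both are positive it is $u(s+r+\sigma)-u(s+\sigma)$, bounded by $r\sup_{\tau\in[0,-\sigma_0]}\|u'(\tau)\|$; on the crossover interval $\sigma\in[-s-r,-s]$ of length $r$, continuity together with $u(0)=f(0)=x$ again gives an $O(r)$ bound. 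The Lipschitz constant of $u$ here is controlled by $\sup_{\tau\in[0,-\sigma_0]}\|u'(\tau)\|=\sup_{\tau}\|\calP_1\calT(\tau)\calG\tbinom xf\|\le M\ee^{\omega(-\sigma_0)}\|\calG\tbinom xf\|\le C(\|x\|_B+\|f\|_{\W^{1,p}(H)})$, using the embedding $\W^{1,p}\hookrightarrow\Ce$ to bound $f$ and $\Phi f$. Integrating against $\dd\eta$ and using that $\eta$ has finite total variation $\tau(0)$ yields $\|\Phi u_{s+r}-\Phi u_s\|\le C(\|x\|_B+\|f\|_{\Lip(H)}+\|f\|_{\W^{1,p}(H)})\,r$, as claimed.

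For c) the membership $\Phi u_s\in D(B)$ is part a) combined with the mapping property in Assumption \ref{ass:new1} b); since $\Phi:\W^{1,p}([-1,0];D(B))\to D(B)$ is bounded (it factors through $\Ce([-1,0];D(B))$) and $\|u_s\|_{\W^{1,p}(D(B))}$ is bounded uniformly in $s\in[0,-\sigma_0]$ by the type estimate for $\calT_B$ on $D(\calG_B)$, one obtains $\sup_s\|\Phi u_s\|_B\le C(\|x\|_B+\|Bx\|_B+\|f\|_{\W^{1,p}(D(B))})$. For the Lipschitz property of $s\mapsto V(s)\Phi u_s$ I would write, for $s_1<s_2$,
\[
V(s_2)\Phi u_{s_2}-V(s_1)\Phi u_{s_1}=V(s_1)\bigl(V(s_2-s_1)-I\bigr)\Phi u_{s_2}+V(s_1)\bigl(\Phi u_{s_2}-\Phi u_{s_1}\bigr),
\]
bounding the first summand by $M(s_2-s_1)\|B\Phi u_{s_2}\|\le C(s_2-s_1)\sup_s\|\Phi u_s\|_B$ via $\Phi u_{s_2}\in D(B)$ and $\|(V(r)-I)y\|\le Mr\|By\|$, and the second by $M$ times the bound from b); collecting the two gives the asserted constant $C(\|x\|_B+\|f\|_{\W^{1,p}(D(B))}+\|f\|_{\Lip(H)}+\|Bx\|_B)$. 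The main obstacle is part a): one must genuinely verify that the delay problem is well posed on the smaller space $\calE_B$ and that $\calT_B$ is the restriction of $\calT$, which is exactly where Assumption \ref{ass:new1} b) — that $\Phi$ preserves $D(B)$-valued functions — is indispensable; the estimates in b) and c) are then routine, the only delicate point being the crossover region in b), dispatched by the compatibility $f(0)=x$.
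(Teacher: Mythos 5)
Your proposal is correct and follows essentially the same route as the paper: realizing the delay semigroup on the smaller space $D(B)\times \Ell^p([-1,0];D(B))$ (the paper's $\widetilde{\calE}$, $\widetilde{\calG}$, $\widetilde{\calT}$) and identifying it with the restriction of $\calT$ via uniqueness for part a), splitting the $\eta$-integral into the same three $\sigma$-regions for part b), and using the product-rule decomposition with boundedness of $\Phi u_s$ in $D(B)$ for part c). The only (harmless) local deviations are that on the crossover region in b) you get a pointwise $O(r)$ bound on the integrand from the Lipschitz continuity of $f$ and $u$ plus the compatibility $f(0)=x=u(0)$, whereas the paper instead uses the Lipschitz continuity of $\eta$ on $[\sigma_0,0]$ to make the $\eta$-measure of that interval $O(t-s)$, and in c) you estimate $\bigl(V(s_2-s_1)-I\bigr)\Phi u_{s_2}$ directly via $\|(V(r)-I)y\|\leq r\|By\|$ instead of the paper's resolvent factorization $V(s)R(\lambda,B)(\lambda-B)\Phi u_s$ --- both variants are equally valid.
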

\begin{proof}
a)
Let $\widetilde\calT$ be the semigroup generated by the part $\widetilde\calG$ of $\calG$ in $\widetilde\calE := D(B)\times \Ell^p([-1,0];D(B))$. This semigroup exists by the general facts presented in the beginning of Section \ref{sec:delay_sg} and by Assumption \ref{ass:new1}. The domain of the generator is
$$
D(\widetilde\calG)=\bigl\{\tbinom{x}{f}\in D(B^2)\times \W^{1,p}([-1,0];D(B)),\: f(0)=x\bigr\}.
$$
Hence
$$
\calD=D(\widetilde \calG)\cap \left(H\times \Lip([-1,0];H)\right).
$$
The space $D(\widetilde\calG)$ is invariant under  $\widetilde\calT$. It is easy to see that for $\tbinom{x}{f}\in \calD$ we have $\calT(t)\tbinom{x}{f}=\widetilde\calT(t)\tbinom{x}{f}$ for all $t\geq 0$. This follows from the fact that $\calD\subset D(\calG)$,
from the construction of semigroups in terms of the resolvent of their generators, and from the comparison of the topologies of $\calE$ and $\widetilde\calE$.
Next, since $D(\widetilde\calG)\subseteq D(\calG)$, from Lemma \ref{lem:lipOK} we obtain that $\calT(t)\tbinom{x}{f}\in \calD$.

\medskip\noindent b) Let $0\leq s \leq t \leq -\sigma_0$ and take $\tbinom{x}{f}\in\calD$. By the definition of $u_t$ we have
\begin{align*}
\left\| \Phi u_{s} - \Phi u_t \right\| &\leq \beta\left\| \int\nolimits_{-1}^0  \dd\eta(\sigma) \left(u_{s}(\sigma)-u_t(\sigma)\right)\right\| \leq
\beta\int\nolimits_{-1}^0 \| u_{s}(\sigma) - u_t(\sigma)\|\cdot \| \dd \eta(\sigma)\| \\
&=\beta\int\nolimits_{-1}^{-t} \| f({s}+\sigma)-f(t+\sigma)\| \cdot \| \dd \eta(\sigma)\| \\
&\qquad + \beta\int\nolimits_{-t}^{-{s}} \| f({s}+\sigma) - u(t+\sigma)\| \cdot \| \dd \eta(\sigma)\| \\
&\qquad+ \beta\int\nolimits_{-{s}}^0 \| u({s}+\sigma)-u(t+\sigma)\| \cdot \| \dd \eta(\sigma)\|.
\end{align*}
The first of these three terms can be estimated as
\begin{equation*}
\beta\int\nolimits_{-1}^{-t} \| f({s}+\sigma)-f(t+\sigma)\| \cdot \| \dd \eta(\sigma)\| \leq  \beta\|f\|_{\Lip(H)} \|\eta\|_{\BV(\LLL(H))}\cdot (t-{s}).
\end{equation*}

Similarly, since $u$ is a mild solution of the delay equation (by $\calD\subset D(\widetilde\calG)\subset D(\calG)$), by Miyadera \cite[Theorem 4.2]{miyadera:77} it is Lipschitz continuous with Lipschitz constant  $\|\calG\tbinom{x}{f}\|$. Hence the third term can be bounded in a similar way as the first: For the Lipschitz constant $L$ of the integrand we obtain
\begin{equation*}
L= \|\calG\tbinom{x}{f}\|\leq \bigl(\|Bx+\Phi f\|+\|f\|_{\W^{1,p}(H)}\bigr).
\end{equation*}
Finally, in the second term we integrate on the interval $[-t,-{s}]$ with $t,{s}\leq -\sigma_0$. Hence this term can be bounded using the Sobolev inequality by $C(t-{s})(1+\|x\|+\|f\|_{\Ell^\infty(H)})$, where $C$ depends  on $\calT$ and on the Lipschitz constant of $\eta$ on $[\sigma_0,0]$.

\medskip\noindent c)  By the considerations in part a), we have that for $\tbinom{x}{f}\in \calD$, the function
 $$ s\mapsto \calP_2\widetilde \calT(s)\tbinom{x}{f}\in \W^{1,p}([-1,0];D(B))$$ is continuous, and hence especially bounded on $[0,1]$ by
  $$
  \sup_{s\in[0,1]}\|\calP_2\widetilde \calT(s)\tbinom{x}{f}\|_{\W^{1,p}(D(B))}\leq M\bigl(1+\|Bx\|_B+\|f\|_{\W^{1,p}(D(B))}\bigr).
  $$
  This shows the first assertion, since $\Phi$ maps bounded subsets of $\W^{1,p}([-1,0];D(B))$ to bounded sets in $D(B)$ by Assumption \ref{ass:new1}. The second statement is a trivial consequence now, since for large $\lambda>0$ the function  $[0,1]\ni s\mapsto V(s)R(\lambda, B)\in \LLL(H)$ is Lipschitz, hence so is
$s\mapsto V(s)R(\lambda,B)(\lambda-B)\Phi u_s$ if we take into account also part b). The Lipschitz constant is
$$
L=C\bigl(1+\|x\|_B+\|f\|_{\W^{1,p}(D(B))}+\|f\|_{\Lip(H)}+\|Bx+\Phi f\|_B\bigr),
$$
where $C$ does not depend on $x$ and $f$.
\end{proof}

\begin{lemma}[{\bf Local error\rm}]\label{lem:loc_error}
There is a constant $K>0$ such that for all $t\in [0,-\sigma_0)$ we have
$$
\bigl\|\calT_1(t)\calT_2(t)\tbinom{x}{f}-\calT(t)\tbinom{x}{f}\bigr\| \leq Kt^{1+\frac1p} \bigl(1+\bigl\|\tbinom xf\bigr\|_\calD\bigr)\quad\mbox{for all $\tbinom xf\in \calD$}.
$$
\end{lemma}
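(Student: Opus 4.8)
The plan is to expand both terms by the variation-of-constants (Duhamel) formula and to estimate the two coordinates of the difference separately, since the norm on $\calE$ is a product norm. Writing $V$ for the semigroup generated by $B$, the split solution is $\calT_1(t)\calT_2(t)\tbinom xf=\tbinom{V(t)v(t)}{v_t}$ with $v(t)=x+\int_0^t\Phi v_s\,\dd s$, while the exact solution is $\calT(t)\tbinom xf=\tbinom{u(t)}{u_t}$ with $u(t)=V(t)x+\int_0^t V(t-s)\Phi u_s\,\dd s$. Throughout I would use the restriction $t\in[0,-\sigma_0)$ so that all history integrals only see the Lipschitz part of $\eta$ and Lemmas \ref{lem:diff_u_t} and \ref{lem:inv_lip} apply.

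For the first coordinate the difference equals $\int_0^t\bigl(V(t)\Phi v_s-V(t-s)\Phi u_s\bigr)\,\dd s$, whose integrand I would split as $V(t)(\Phi v_s-\Phi u_s)+\bigl(V(t)-V(t-s)\bigr)\Phi u_s$. The first summand is of order $s$ by Lemma \ref{lem:diff_u_t}; the second I would rewrite as $V(t-s)(V(s)-I)\Phi u_s$ and bound by $\|(V(s)-I)\Phi u_s\|\le s\|B\Phi u_s\|$, using that $\Phi u_s\in D(B)$ with $\|B\Phi u_s\|$ bounded uniformly in $s$ by $C\|\tbinom xf\|_\calD$ (Lemma \ref{lem:inv_lip} c) together with the contractivity of $V$). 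Integrating, both contributions are of order $t^2$, which settles the first coordinate cleanly.

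For the second coordinate, $v_t$ and $u_t$ coincide on $[-1,-t]$ and differ on $(-t,0]$ by $v(t+\sigma)-u(t+\sigma)$, so after the substitution $r=t+\sigma$ one gets $\|v_t-u_t\|_{\Ell^p(H)}^p=\int_0^t\|v(r)-u(r)\|^p\,\dd r$. Here $v(r)-u(r)=(I-V(r))x+\int_0^r\bigl(\Phi v_s-V(r-s)\Phi u_s\bigr)\,\dd s$, and the integral term is of order $r^2$ by the decomposition of the previous paragraph (now $\Phi v_s-\Phi u_s$ is of order $s$ and $(I-V(r-s))\Phi u_s$ of order $r-s$, so their integral over $[0,r]$ is quadratic).

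The step I expect to be the main obstacle is the remaining term $(I-V(r))x$ in the history coordinate, which reflects that the split delay semigroup $\calT_2$ transports the initial value $x=f(0)$ without the action of $B$, so that the split history lags the exact one by the full $B$-evolution. The elementary bound $\|(I-V(r))x\|\le r\|Bx\|$ only produces, after the $\Ell^p$-integration in $r$, a contribution of order $t^{1+1/p}$, which for $p>1$ is weaker than the asserted $t^2$. Recovering the full second-order estimate therefore cannot rest on this crude bound alone; it must genuinely exploit the extra regularity carried by $\calD$ (the memberships $x\in D(B^2)$ and $f\in\Lip([-1,0];H)$ with $f(0)=x$) and some compensation between the split and exact histories near $\sigma=0$. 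I would concentrate the effort here, and only once this term is controlled would I assemble the two coordinate estimates into the bound $Kt^2\|\tbinom xf\|_\calD$, with $K$ depending only on the semigroup bounds, the total variation of $\eta$, and its Lipschitz constant on $[\sigma_0,0]$.
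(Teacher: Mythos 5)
Your handling of the first coordinate is correct and coincides in essence with the paper's own argument: the paper also expands $V(t)v(t)-u(t)$ by Duhamel, replaces $\Phi v_s$ by $\Phi u_s+\calO(s)$ via Lemma \ref{lem:diff_u_t}, and then estimates $\int_0^t V(t-s)\bigl(V(s)-I\bigr)\Phi u_s\,\dd s$ using the Lipschitz continuity of $s\mapsto\Phi u_s$ and of $s\mapsto V(s)\Phi u_s$ from Lemma \ref{lem:inv_lip}; your variant, which bounds $\|(V(s)-I)\Phi u_s\|\le s\,\|B\Phi u_s\|$ using the $D(B)$-boundedness of $s\mapsto\Phi u_s$, is an equally valid way to close that part.

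The obstacle you isolate in the history coordinate is genuine, and you should know that the paper does not overcome it either: at exactly this point the paper simply asserts $\int_{-t}^0\|u(t+\sigma)-v(t+\sigma)\|^p\dsigma\le Ct^{2p+1}(\|f\|_{\Ell^p(H)}+\|x\|)^p$, i.e.\ that $\|u(r)-v(r)\|=\calO(r^2)$ for $0\le r\le t$, with no justification. That assertion is false in general: as in your computation, $v(r)-u(r)=(I-V(r))x+\int_0^r\bigl(\Phi v_s-V(r-s)\Phi u_s\bigr)\dd s=-rBx+\calO(r^2)$, and the linear term cannot be removed. Moreover, it is not merely that the bound $\|(I-V(r))x\|\le r\|Bx\|$ is crude; a matching lower bound holds. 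Taking, e.g., $x\in D(B^2)$ with $Bx\neq 0$ and $f\equiv x$ (this pair lies in $\calD$), one gets $\|v_t-u_t\|_{\Ell^p(H)}\ge c\,\|Bx\|\,t^{1+1/p}$ for all small $t>0$, which is incompatible with the claimed bound $Kt^2\bigl\|\tbinom{x}{f}\bigr\|_\calD$ when $p>1$. So the compensation you hope to extract from the regularity of $\calD$ does not exist: for $p>1$ the lemma as stated is false, and with it the derivation of first-order convergence in Theorem \ref{thm:convlinDB} breaks down at this step. What survives is the case $p=1$, where your elementary bound already gives $\int_0^t\|u(r)-v(r)\|\,\dd r\le Ct^2\bigl\|\tbinom{x}{f}\bigr\|_\calD$ and the proof closes, and, for $p>1$, a local error of order $t^{1+1/p}$, which after the telescoping argument yields global convergence of order $1/p$ rather than $1$. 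The correct reaction to your ``main obstacle'' is therefore not to search harder for a cancellation, but to recognize it as a counterexample to the claimed second-order local error.
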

\begin{proof}
 Take $\tbinom{x}{f}\in \calD$, and let $u$, $u_t$, $v$, $v_t$ be as in \eqref{eq:split_sgs} and \eqref{eq:delay_sq}, respectively. Using the variation constants formula for $u$, which is valid by \cite[Proposition 5.8]{Webb76} since $p>1$,  by the linearity of $V(t)$ (see Assumption \ref{ass:new1}) we obtain
\begin{align*}
&V(t)v(t)-u(t) = V(t)x + V(t)\int\nolimits_0^t \Phi v_s \dd s - V(t)x - \int\nolimits_0^t V(t-s)\Phi u_s \dd s.
\intertext{By Lemma \ref{lem:diff_u_t}, this is further equal to}
& V(t)\int\nolimits_0^t \Phi u_s \dd s - \int\nolimits_0^t V(t-s)\Phi u_s\dd s  +\int\nolimits_0^t \calO(s)\dd s \\
&\quad= \int\nolimits_0^t V(t-s)\left( V(s)\Phi u_s-\Phi u_s\right) \dd s+ \calO(t^2)\\
&\quad=\int\nolimits_0^t V(t-s)\left( V(s)\Phi u_s-\Phi u_0\right) \dd s+\int\nolimits_0^t V(t-s)\left(\Phi u_0-\Phi u_s\right) \dd s
 +\calO(t^2).
\end{align*}
By Lemma \ref{lem:inv_lip}, the function $[0,-\sigma_0]\ni s\mapsto \Phi u_s$ is Lipschitz continuous, and the function
$
[0,-\sigma_0]\ni s\mapsto V(s)\Phi u_s$
is also Lipschitz continuous. Denoting by $L$ the larger of their Lipschitz constants, we obtain that
\begin{align*}
\|V(t)v(t)-u(t)\| \leq
M L t^2 +MLt^2+\|\calO(t^2)\| \leq C_0t^2 \bigl(1+\bigl\|\tbinom{x}{f}\bigr\|_{\calD}\bigr).
\end{align*}
Note that we also used Lemma \ref{lem:diff_u_t} again.

As for the second coordinates and $t\in [0,-\sigma_0)$, we can make the following computations:
\begin{align*}
v(t)-u(t)&=x+\int_0^t \Phi v_r\dd r-V(t)x-\int_0^t V(t-r)\Phi u_r\dd r\\
&=x-V(t)x+\int_0^t \calO(r)\dd r+\int_0^t (I-V(t-r))\Phi u_r\dd r\\
&=x-V(t)x+\calO(t^2)+\int_0^t (I-V(t-r))\Phi u_r\dd r,
\end{align*}
where  $\calO(t^2)$ denotes a term of order of magnitude $C_1t^2(1+\|x\|+\|f\|_{\Ell^p(H)})$, see Lemma \ref{lem:diff_u_t}. We obtain from Lemma \ref{lem:inv_lip}.c) that
\begin{align*}
&\Bigl\|\int_0^t (I-V(t-r))\Phi u_r\dd r\Bigr\|\leq \int_0^t \|(I-V(t-r))\Phi u_r\|\dd r\\
&\quad\leq \int_0^t (t-r) \dd r\cdot C_2(1+\|x\|_B+\|f\|_{\W^{1,p}(D(B))}+\|f\|_{\Lip(H)}+\|Bx\|_B)=\calO(t^2)
\end{align*}
with implied constant $C_3\|\tbinom xf\|_{\calD}$. Since $x\in D(B)$ we have $x-V(t)x=C_4t\|Bx\|$.
By putting together these estimates we obtain
\begin{align*}
\|u_t-v_t\|^p_{\Ell^p(H)}&=\int_{-t}^0 \|u(t+\sigma)-v(t+\sigma)\|^p\dd \sigma\leq C_5 t^{p+1}\bigl(1+\|\tbinom xf\|_\calD\bigr).
\end{align*}
This finishes the proof by defining $K$ in terms of the previously appearing constants $C_i$.
\end{proof}

\begin{proof}[Proof of Proposition \ref{prop:convlinDB}]
Without loss of generality we may assume $T_{m}=1$ (the constant $C$ shall depend on $T_{m}$, however). By a standard telescopic summation we obtain
\begin{align}\label{eq:tele}
&\bigl(\calT_1(\tfrac{t}{n})\calT_2(\tfrac{t}{n})\bigr)^n-\calT(t) \\
&\qquad = \sum_{k=0}^{n-1}\bigl((\calT_1(\tfrac{t}{n})\calT_2(\tfrac{t}{n})\bigr)^{n-k}  \calT_1(\tfrac{t}{n})\calT_2(\tfrac{t}{n})\calT(\tfrac{kt}{n}) \notag\\
&\qquad \qquad \qquad \qquad \qquad - (\calT_1(\tfrac{t}{n})\calT_2(\tfrac{t}{n})\bigr)^{n-k}\calT(\tfrac{t}{n})\calT(\tfrac{kt}{n})\bigr).\notag
\end{align}
As we saw in Lemma \ref{lem:inv_lip}, the set $\calD$ is invariant under the delay semigroup $\calT$. Also from this lemma  and from  Lemma \ref{lem:lipOK}, it follows that for some $M\geq 1$
\begin{equation*}
\bigl\|\calT(s)\tbinom xf\bigr\|_{\calD} \leq M\bigl(1+\bigl\|\tbinom xf\bigr\|_{\calD}\bigr)\quad \mbox{for all $s\in [0,1]$, $\tbinom xf\in \calD$.}
\end{equation*}
By \eqref{eq:stab} we can bound the Lipschitz norm of the first factors on the right-hand side of \eqref{eq:tele} by exponential terms. Hence, by applying Lemma \ref{lem:loc_error} we obtain that
\begin{align*}
&\left\|\left(\bigl(\calT_1(\tfrac{t}{n})\calT_2(\tfrac{t}{n})\bigr)^n-\calT(t)\right)\tbinom{x}{f}\right\| \leq
\sum_{k=0}^{n-1} \widetilde M\ee^{2\gamma kt/n} K \left(\tfrac{t}{n}\right)^{1+1/p} \bigl(1+\bigl\| \calT(\tfrac{kt}{n})\tbinom{x}{f}\bigr\|_{\calD}\bigr)\\
&\qquad\leq\frac{Ct^{1+1/p}}{n^{1/p}}\bigl(1+\bigl\| \tbinom{x}{f}\bigr\|_{\calD}\bigr),
\end{align*}
with an appropriate constant $C\geq 0$ (depending on $M$, $K$, $\widetilde M$, $\gamma$ and $T_{m}$).
\end{proof}



\section{Numerical illustration}\label{sec:num}

\noindent In the following we illustrate the results on a delayed diffusion problem which has been chosen to illustrate the efficiency and applicability of operator splitting procedures to partial differential equations with delay.
The aim of this section is to demonstrate
\begin{itemize}
\item that the theoretical calculations are sharp in the sense that they can also be practically achieved, and
\item that the splitting does not introduce additional large error constants.
\end{itemize}
These goals will be achieved by calculating the order of convergence and by comparing the method to the implicit Euler scheme, the standard first order method to solve such kind of equations.

Consider the problem
\begin{equation}\label{eq:numexample}
\left\{
\begin{aligned}
\tfrac\partial{\partial t} w(t,x)&=\tfrac{\partial^2}{\partial x^2} w(t,x)+\sin\Big(w(t-1,x)+\int\nolimits_{-1}^{-0.5}\sigma\cdot w(t+\sigma,x)\dd\sigma\Big), t\geq 0, \\
w(t,0)&=w(t,\pi)=0, \quad x\in[0,\pi], \\
w(\sigma,x)&=\ee^{\sigma}\cdot x(\pi-x), \quad x\in [0,\pi],\: \sigma\in[-1,0].
\end{aligned}
\right.
\end{equation}
We split the equation into the undelayed diffusion equation
$$
\begin{cases}
\tfrac\partial{\partial t} w(t,x)=\tfrac{\partial^2}{\partial x^2} w(t,x),\\
w(t,0)=w(t,\pi)=0
\end{cases}
$$
subject to homogeneous Dirichlet boundary condition, and the pure delayed problem
$$
\tfrac\partial{\partial t} w(t,x)=\sin\Big(w(t-1,x)+\int\nolimits_{-1}^{-0.5}\sigma\cdot w(t+\sigma,x)\dd\sigma\Big)
$$
with the initial history function
$$
f(\sigma,x) = \ee^{\sigma}\cdot x(\pi-x) \quad\text{for}\quad \sigma\in[-1,0].
$$
We solve the heat equation by Fourier method via discrete (fast) Fourier transform. The delay problem is solved by successive integration using trapezoidal rule. We define the time-dependent vector $\mathbf{w}^n$ with elements $\mathbf{w}_j^n=w(nh,j\dx)$, $j=0,\dots,N=\frac{\pi}{\dx}$, $n\in\mathbb{N}$, where $h$ and $\dx$ denotes the time step and  the spatial grid size, respectively. We then obtain the formulae
\begin{align*}
&\boldsymbol\Lambda^{n+1} = \tfrac{h}{2}\sum_{i=0}^{\frac{0.5}{h}}\left((-1+ih)\cdot\mathbf{w}^{n+1-k+i}+(-1+(i+1)h)\cdot\mathbf{w}^{n+1-k+i+1}\right), \\
&\mathbf{w}^{n+1} = \mathbf{w}^n+h\cdot\sin\left(\mathbf{w}^{n+1-k}+\boldsymbol\Lambda^{n+1}\right),\quad\text{with}\quad h=\tfrac{1}{k},\, k\in\mathbb{N}.
\end{align*}
Note that the identity $\mathbf{w}_j^l = f(-1+(n+l)h,j\dx)$ holds for $l\le 0$.
%
Since both limits of the integral are negative, the history function is evaluated only in the past if the time step $h\le0.5$. To this end the numerical solution is saved for the time levels $n-k,\dots ,n$. The results are presented for $N=128$ grid points in the interval $[0,\pi]$. We obtained similar results also for other values of $N$.

Next we compare our numerical results with those obtained by using the standard first order method, namely the implicit Euler method. In this case no splitting procedure was applied and the Laplacian was approximated by the second-order finite difference method. Then the boundary condition means $\mathbf{w}_0^{n+1}=\mathbf{w}_{\pi/\dx}^{n+1}=0$, and for $j=1,\dots ,\tfrac{\pi}{\dx}-1$ one has
\begin{equation*}
\left(I-\tfrac{h}{\dx^2}\mathrm{tridiag}(1,-2,1)\right)\mathbf{w}^{n+1} = \mathbf{w}^n+h\cdot\sin\left(\mathbf{w}^{n+1-k}+\boldsymbol\Lambda^{n+1}\right)
\end{equation*}
with the notation above. We have again $\mathbf{w}_j^l = f(-1+(n+l)h,j\dx)$ for $l\le 0$.

The reference solution, which is needed for the error analysis, was also computed by the implicit Euler method but with finer spatial and temporal resolutions ($N=256$ grid points and time step $h=10^{-4}$). The codes are written in \textsc{matlab}. The system of linear equations, which appears when using the implicit Euler method, is solved by the ``backslash'' command, and its tridiagonal matrix is stored as a sparse matrix.

We examined the behavior of the relative global error and the CPU time for various values of the time step for time $t=2$. The numerical experiment shows that the splitting is of first order as analytically proved in Theorem \ref{thm:convlinDB}. In Figure \ref{fig:order} the resulting global errors are shown for various values of the time step $h$. Fitting a line on the results displayed in a logarithmic scale yields the numerical approximation to the order, which approximately equals 1 also for the splitting method.

\begin{figure}[!ht]
	\begin{center}
{\includegraphics[width=10cm]{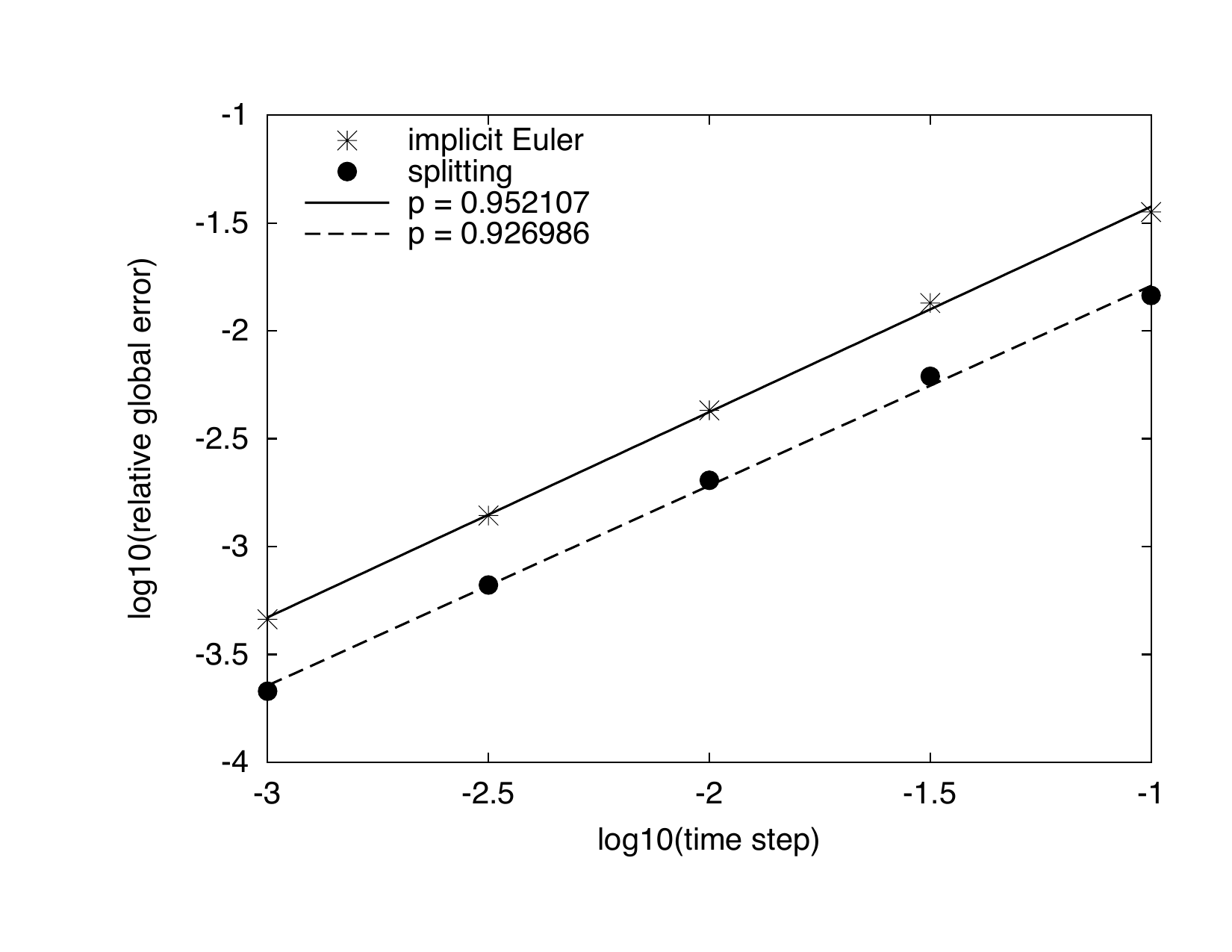}}
\end{center}
\caption{Order $p$ determined numerically at time level $t=2$ (for $N=128$ grid points). \label{fig:order}}
\end{figure}

We note that up to time $t=2.5$ the splitting method yields more accurate solutions than the implicit Euler method. After that time the implicit Euler method becomes more accurate. In Figure \ref{fig:cpu} the computational (CPU) time of both methods are presented for various values of the time step at time $t=2$. One can clearly see that up to a certain value of the time step the splitting method performs faster than the implicit Euler method.

\begin{figure}[!ht]
\begin{center}
{\includegraphics[width=10cm]{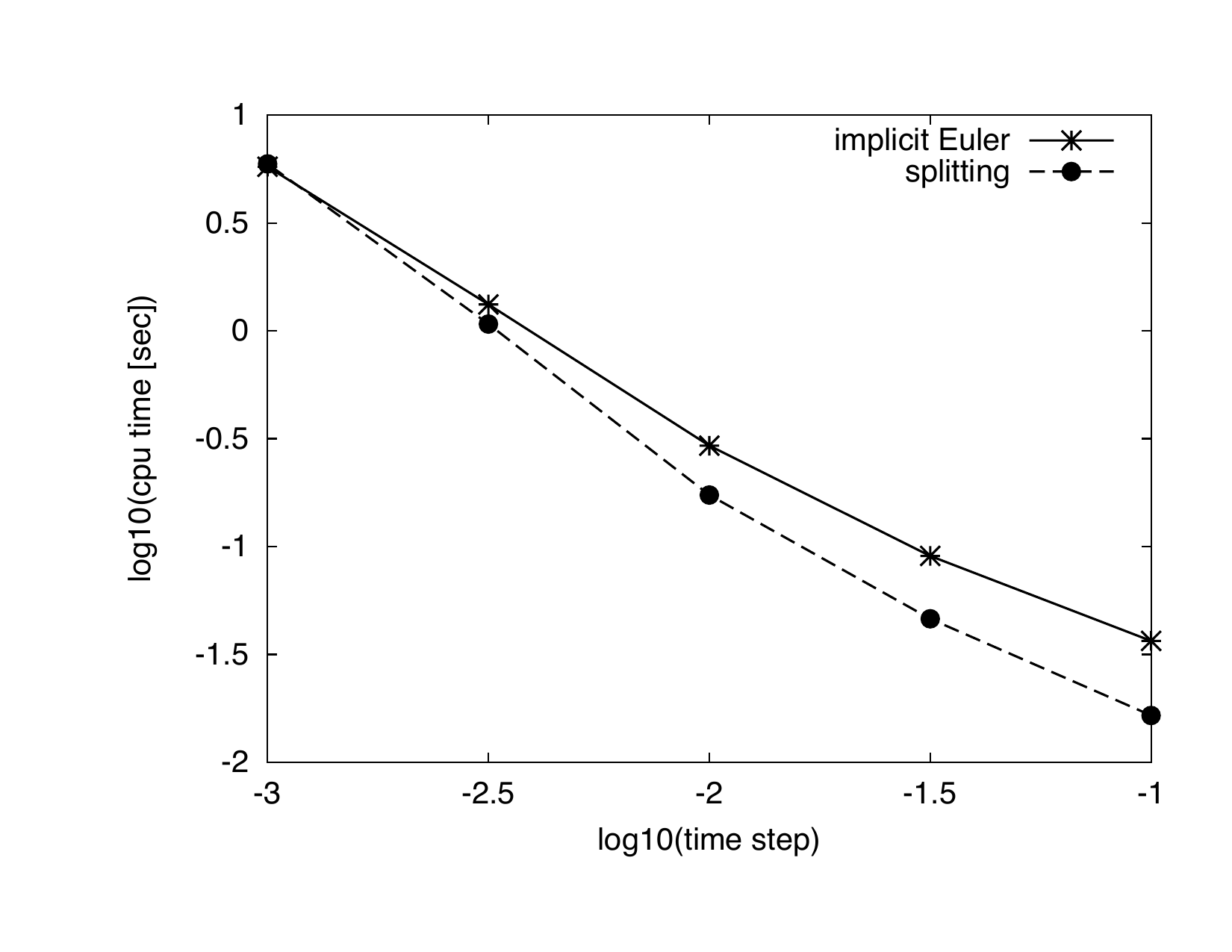}}
\end{center}
\caption{CPU times at the time level $t=2$ (for $N=128$ grid points). \label{fig:cpu}}
\end{figure}

Figure \ref{fig:cpuvserror} shows the efficiency of both methods. For this example the splitting outperforms  the implicit Euler method.

\begin{figure}[!ht]
\begin{center}{\includegraphics[width=10cm]{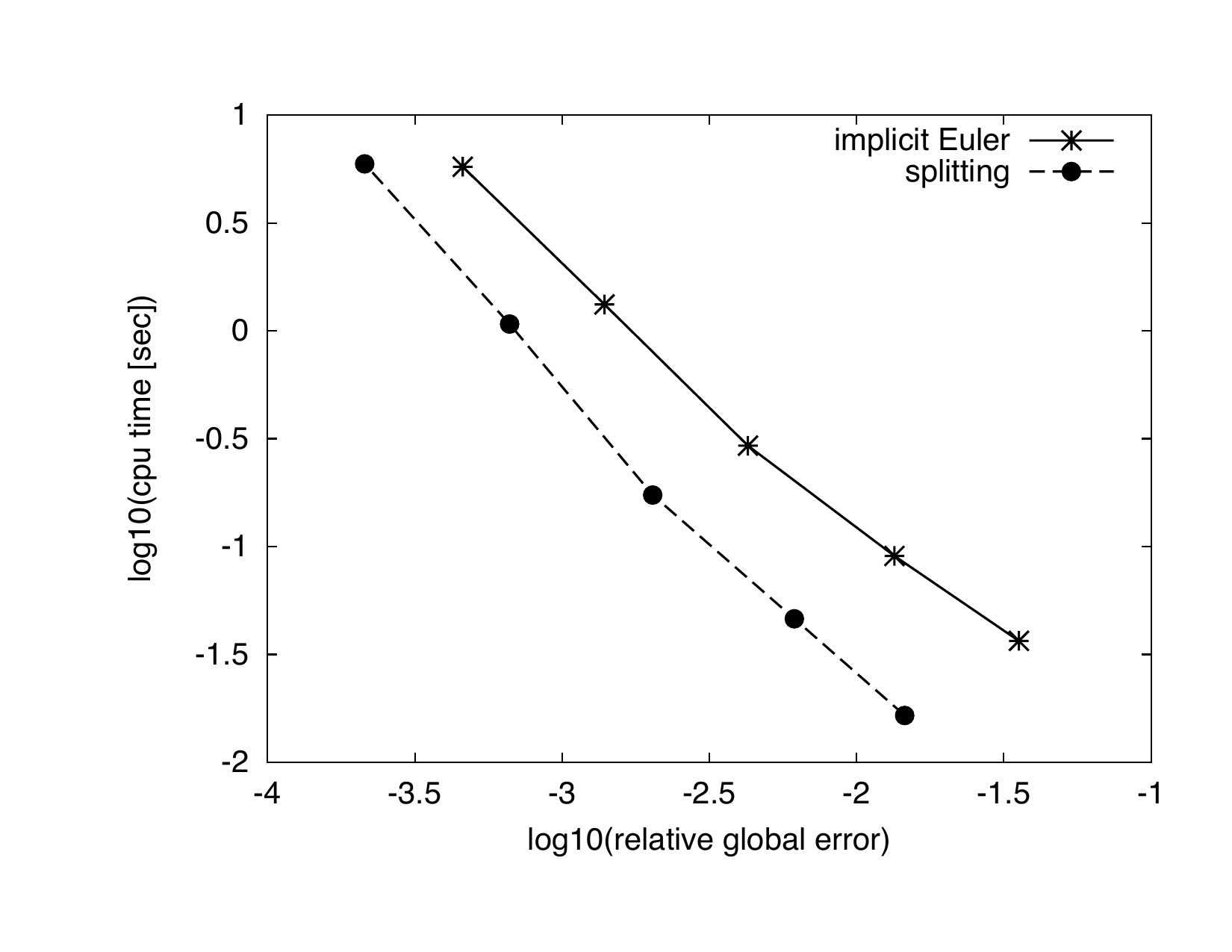}}\end{center}
\caption{CPU times against the error at the time level $t=2$ (for $N=128$ grid points). \label{fig:cpuvserror}}
\end{figure}\
The reason of this case study was to show that the application of splitting offers an alternative method for solving delay equations which can
be naturally split into more sub-problems. The split sub-problems can be solved by applying the appropriate method for each of them (e.g., the Fourier
method for the Laplacian in our case). One cannot forget about the splitting error of course, which we proved to be of first order, and numerical
experiments illustrated the theoretical results. With the help of the case study above, however, we have demonstrated that splitting methods perform
comparably to well-established numerical methods (of the same order), when the structure of the equation can be exploited.

\section*{Acknowledgments}
A.~B.~was supported by a DAAD guest professorship at the university of Wuppertal.


\parindent0pt

\end{document}